\numberwithin{equation}{section}
\newtheorem{theorem}[equation]{Theorem}
\newtheorem*{theorem*}{Theorem} \newtheorem{lemma}[equation]{Lemma}
\newtheorem*{conjecture*}{Mamma Conjecture}
\newtheorem*{conjecture1*}{Mamma Conjecture (revisited)}
\newtheorem{proposition}[equation]{Proposition}
\newtheorem{corollary}[equation]{Corollary}
\newtheorem*{corollary*}{Corollary}
\theoremstyle{remark}
\newtheorem{example}[equation]{Example}
\newtheorem{notation}[equation]{Notation}
\theoremstyle{remark}
\newtheorem{remark}[equation]{Remark}
\newcommand{\cA}{{\mathcal A}}
\newcommand{\cB}{{\mathcal B}}
\newcommand{\cC}{{\mathcal C}}
\newcommand{\cD}{{\mathcal D}}
\newcommand{\cN}{{\mathcal N}}
\newcommand{\cT}{{\mathcal T}}
\newcommand{\cZ}{{\mathcal Z}}
\newcommand{\bbC}{\mathbb{C}}
\newcommand{\bbF}{\mathbb{F}}
\newcommand{\bbQ}{\mathbb{Q}}
\newcommand{\bbZ}{\mathbb{Z}}
\DeclareMathOperator{\id}{id}
\DeclareMathOperator{\NChow}{NChow} 
\DeclareMathOperator{\NNum}{NNum} 
\DeclareMathOperator{\Num}{Num} 
\newcommand{\dgcat}{\mathrm{dgcat}} 
\newcommand{\perf}{\mathrm{perf}}
\newcommand{\Chow}{\mathrm{Chow}}
\newcommand{\dg}{\mathrm{dg}}
\newcommand{\Hom}{\mathrm{Hom}}
\newcommand{\End}{\mathrm{End}}
\newcommand{\rep}{\mathrm{rep}}
\newcommand{\dgHo}{\mathrm{H}^0}
\newcommand{\Hmo}{\mathrm{Hmo}}
\newcommand{\op}{\mathrm{op}}
\newcommand{\too}{\longrightarrow}
\newcommand{\ie}{\textsl{i.e.}\ }
\newcommand{\eg}{\textsl{e.g.}}
\let\oldmarginpar\marginpar
\def\marginpar#1{\oldmarginpar{\tiny #1}}
\begin{document}

\title[Finite generation of the numerical Grothendieck group]{Finite generation of the \\numerical Grothendieck group}
\author{Gon{\c c}alo~Tabuada}
\address{Gon{\c c}alo Tabuada, Department of Mathematics, MIT, Cambridge, MA 02139, USA}
\email{tabuada@math.mit.edu}
\urladdr{http://math.mit.edu/~tabuada}
\thanks{The author was partially supported by a NSF CAREER Award}

\subjclass[2010]{11S40, 13D15, 14A22, 14C15, 14C25, 14F30}
\date{\today}
%
\abstract{Let $k$ be a finite base field. In this note, making use of topological periodic cyclic homology and of the theory of noncommutative motives, we prove that the numerical Grothendieck group of every smooth proper dg $k$-linear category is a finitely generated free abelian group. Along the way, we prove moreover that the category of noncommutative numerical motives over $k$ is abelian semi-simple, as conjectured by Kontsevich. Furthermore, we show that the zeta functions of endomorphisms of noncommutative Chow motives are rational and satisfy a functional equation.}}

\maketitle
\vskip-\baselineskip
\vskip-\baselineskip


\section{Introduction and statement of results}
Given a smooth proper $k$-scheme $X$, over a base field $k$, it is well-known that the (graded) group of algebraic cycles up to numerical equivalence $\cZ^\ast(X)/_{\!\!\sim \mathrm{num}}$ is a finitely generated free abelian group. The proof of this result makes use of the existence of appropriate Weil cohomology theories (\eg\ de Rham cohomology theory in characteristic zero and crystalline cohomology theory in positive characteristic). The main goal of this note is to establish a far-reaching noncommutative generalization of this finite generation result; consult also the motivic results in \S\ref{sec:NCmotives}.

A {\em differential graded (=dg) category} $\cA$, over $k$, is a category enriched over complexes of $k$-vector spaces; see \S\ref{sub:dg}. Every (dg) $k$-algebra $A$ gives naturally rise to a dg category with a single object. Another source of examples is proved by $k$-schemes (or more generally by algebraic $k$-stacks) since the category of perfect complexes $\perf(X)$ of every $k$-scheme $X$ admits a canonical dg enhancement\footnote{When X is quasi-projective this dg enhancement is unique; see Lunts-Orlov \cite[Thm. 2.12]{LO}.} $\perf_\dg(X)$; see Keller \cite[\S4.6]{ICM-Keller}. Following Kontsevich \cite{Miami,finMot,IAS}, a dg category $\cA$ is called {\em smooth} if it is compact as a bimodule over itself and {\em proper} if $\sum_n \mathrm{dim}_k H^n\cA(x,y)<\infty$ for every pair of objects $(x,y)$. Examples include finite dimensional $k$-algebras of finite global dimension $A$ (when $k$ is perfect) as well as the dg categories $\perf_\dg(X)$ associated to smooth proper $k$-schemes $X$ (or to smooth proper algebraic $k$-stacks).

Given a proper dg category $\cA$, its Grothendieck group $K_0(\cA):=K_0(\cD_c(\cA))$ comes equipped with the Euler pairing $\chi \colon  K_0(\cA) \times K_0(\cA) \to \bbZ$ defined as follows:
\begin{equation}\label{eq:pairing}
([M],[N]) \mapsto \sum_n (-1)^n \mathrm{dim}_k \Hom_{\cD_c(\cA)}(M,N[-n])\,.
\end{equation}
This bilinear pairing is, in general, not symmetric neither skew-symmetric. Nevertheless, when $\cA$ is moreover smooth, the associated left and right kernels of $\chi$ agree; see \cite[Prop.~4.24]{book}. Consequently, under these assumptions on $\cA$, we have a well-defined {\em numerical Grothendieck group} $K_0(\cA)/_{\!\!\sim \mathrm{num}}:=K_0(\cA)/\mathrm{Ker}(\chi)$.
\begin{theorem}\label{thm:main1}
When the base field $k$ is finite, the numerical Grothendieck group $K_0(\cA)/_{\!\!\sim \mathrm{num}}$ is a finitely generated free abelian group.
\end{theorem}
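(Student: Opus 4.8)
The plan is to treat topological periodic cyclic homology $\mathrm{TP}(-)$ as a noncommutative Weil cohomology theory over the finite field $k$ and to adapt the classical proof that numerical equivalence classes of algebraic cycles form a finitely generated free abelian group. Write $p:=\mathrm{char}(k)$, $W(k)=\pi_0\mathrm{TP}(k)$, $F:=W(k)[1/p]$ — a \emph{finite} extension of $\bbQ_p$, precisely because $k$ is finite — and $G:=K_0(\cA)/_{\!\!\sim \mathrm{num}}$. By \cite[Prop.~4.24]{book} the Euler pairing $\chi$ descends to a pairing $\bar\chi\colon G\times G\to\bbZ$ whose left radical is trivial, so $g\mapsto\bar\chi(g,-)$ embeds $G$ into $\Hom_\bbZ(G,\bbZ)$; and $\Hom_\bbZ(G,\bbZ)$ is free of rank at most $\mathrm{rank}_\bbZ(G)$ (choose elements of $G$ mapping to a $\bbQ$-basis of $G\otimes\bbQ$ and evaluate on them). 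Hence the whole statement follows once one shows $\dim_\bbQ(G\otimes_\bbZ\bbQ)<\infty$: freeness and finite generation of $G$ then drop out simultaneously.

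To bound the rank of $G$ I would use the cyclotomic trace $\mathrm{ch}\colon K_0(\cA)\to\mathrm{TP}_0(\cA)$ together with two properties of $\mathrm{TP}$. First, since $\cA$ is smooth proper, $\mathrm{TP}_0(\cA)$ is a finitely generated $W(k)$-module, so $H(\cA):=\mathrm{TP}_0(\cA)\otimes_{W(k)}F$ is a finite-dimensional $F$-vector space. Second, since $\cA$ is dualizable (with dual $\cA^{\op}$) and the cyclotomic trace is a multiplicative natural transformation compatible with duality, there is an $F$-bilinear pairing $\langle-,-\rangle$ on $H(\cA)$ satisfying $\langle\mathrm{ch}(x),\mathrm{ch}(y)\rangle=j(\chi(x,y))$ for all $x,y\in K_0(\cA)$, where $j\colon\bbZ\hookrightarrow F$ is the canonical embedding (on the unit $k$ the trace is just $K_0(k)=\bbZ\to\mathrm{TP}_0(k)=W(k)$, whence $j$). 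Now choose $g_1,\dots,g_N\in K_0(\cA)$ whose images span the $F$-subspace of $H(\cA)$ generated by $\mathrm{Im}(\mathrm{ch})$; here $N<\infty$ by the first property. Given any $x\in K_0(\cA)$, write $\mathrm{ch}(x)=\sum_j c_j\,\mathrm{ch}(g_j)$ with $c_j\in F$; pairing against the $g_k$ shows that the $\bbZ$-coefficient linear system $\sum_j\chi(g_j,g_k)\,c_j=\chi(x,g_k)$, $k=1,\dots,N$, is solvable over $F$, hence over $\bbQ$. For a rational solution $(c_j')$ the class $x-\sum_j c_j'g_j\in K_0(\cA)_\bbQ$ pairs to $0$ against every $g_k$, and — since its $\mathrm{ch}$ still lies in the $F$-span of the $\mathrm{ch}(g_k)$, so that pairing an arbitrary $y$ reduces to pairing the $g_k$ — it pairs to $0$ against every $y\in K_0(\cA)$; that is, $x-\sum_j c_j'g_j$ is numerically trivial. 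Thus the images of $g_1,\dots,g_N$ span $G\otimes\bbQ$, giving $\dim_\bbQ(G\otimes\bbQ)\le N<\infty$.

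The main obstacle is the first property above: finite generation of $\mathrm{TP}_0(\cA)$ over $W(k)$ for $\cA$ smooth proper. Its characteristic-zero shadow — finite-dimensionality of periodic cyclic homology $\mathrm{HP}_\ast$ of a smooth proper dg category — is classical, but in characteristic $p$ one can have $\dim_k\mathrm{HP}_\ast(\cA)=\infty$, so one genuinely must pass to the $p$-adic refinement $\mathrm{TP}$; establishing the required finiteness (via the dualizability of $\mathrm{THH}(\cA)$ over $\mathrm{THH}(k)$ and the behaviour of the Tate construction) is where the real work sits. It is also the point that forces $k$ to be finite: only then is $W(k)$ a finite $\bbZ_p$-algebra, so that a finitely generated $W(k)$-module has finite $\bbQ_p$-dimension after inverting $p$ — without which the rank bound would say nothing. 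By comparison the second property (compatibility of the cyclotomic trace with the pairings) is formal once one knows that the smooth proper, hence dualizable, category $\cA$ is sent by $\mathrm{TP}$ to a dualizable $\mathrm{TP}(k)$-module and that the trace respects the symmetric monoidal structures.
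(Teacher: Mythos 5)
Your proposal takes a genuinely different route from the paper. The paper's proof is indirect: it first establishes that the category $\NNum(k)_\bbQ$ of noncommutative numerical motives is abelian semi-simple (Theorems~\ref{thm:TP} and~\ref{thm:semi}, via the Andr\'e--Kahn criterion applied to the symmetric monoidal realization $TP_\pm(-)_{1/p}$), then identifies $K_0(\cA)/_{\!\!\sim\mathrm{num}}$ with the $\Hom$-group $\Hom_{\NNum(k)}(U(k),U(\cA))$ via Proposition~\ref{prop:key2}, and finally quotes finite dimensionality of $\Hom$'s in an abelian semi-simple $\bbQ$-linear category. You bypass the semi-simplicity machinery entirely and bound the rank head-on by transporting the Euler pairing along a $TP$-valued Chern character and running a Gram-matrix argument over $\bbQ$, in the spirit of the classical proof for algebraic cycles. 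Both proofs ultimately rest on the same two nontrivial inputs: finite dimensionality of $TP_\pm(\cA)_{1/p}$ for $\cA$ smooth proper (Theorem~\ref{thm:TP}), and compatibility of the Euler pairing with the $TP$-realization. Your route is more economical if finite generation of $K_0(\cA)/_{\!\!\sim\mathrm{num}}$ is all one wants; the paper's route additionally yields the semi-simplicity theorem, which is of independent interest.

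The one place you genuinely underestimate the work is your ``second property''. It is indeed formal that a symmetric monoidal functor such as $TP_\pm(-)_{1/p}$ sends the \emph{categorical trace pairing}
$\varphi\colon K_0(\cA)\times K_0(\cA^\op)\to\bbZ$, $([M],[P])\mapsto[M\otimes^{\bf L}_\cA P]$,
to the evaluation pairing on the dualizable object $TP_\pm(\cA)_{1/p}$, so that $\langle\mathrm{ch}(x),\mathrm{ch}'(y)\rangle=j(\varphi(x,y))$ for the Chern characters on $\cA$ and $\cA^\op$. What is \emph{not} formal, and what your identity $\langle\mathrm{ch}(x),\mathrm{ch}(y)\rangle=j(\chi(x,y))$ actually requires, is the identification of the Euler pairing $\chi([M],[N])=[{\bf R}\Hom_\cA(M,N)]$ with $\varphi$ under the duality $N\mapsto N^\ast$, $\cD_c(\cA)\simeq\cD_c(\cA^\op)^\op$. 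This is exactly the content of the paper's Proposition~\ref{prop:key2} and Lemma~\ref{lem:key}: one must check that $M\otimes^{\bf L}_\cA N^\ast$ and ${\bf R}\Hom_\cA(M,N)$ are $k$-linear duals, and the argument reduces to compact generators and uses properness of $\cA$ and finiteness of the relevant homotopy colimits. You cannot dispense with this by citing ``the trace respects the symmetric monoidal structures''; without it you control $K_0(\cA)/\mathrm{Ker}(\varphi)$ rather than $K_0(\cA)/\mathrm{Ker}(\chi)$.

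Two minor points. First, you only need $H(\cA)=TP_0(\cA)\otimes_{W(k)}F$ to be finite dimensional \emph{over $F$}, which is precisely what Theorem~\ref{thm:TP} delivers via rigidity of $\NChow(k)_R$; integral finite generation of $TP_0(\cA)$ as a $W(k)$-module, while true, is more than you use. Second, your explanation of why $k$ must be finite is off target: your rank bound is $\dim_\bbQ(G\otimes\bbQ)\le N\le\dim_F H(\cA)$, which depends only on the $F$-dimension of $H(\cA)$ and is completely insensitive to whether $F/\bbQ_p$ is a finite extension. The finiteness of $k$ enters through Hesselholt's computation $TP_\ast(k)_{1/p}\simeq\mathrm{Sym}_K\{v^{\pm1}\}$ and the K\"unneth theorem for $TP$ on smooth proper dg categories, on which Theorem~\ref{thm:TP} depends, not through the $\bbZ_p$-module theory of $W(k)$.
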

The analogue of Theorem \ref{thm:main1}, with $k$ of characteristic zero, was proved in \cite[Thm.~1.2]{Separable}. Therein, we used periodic cyclic homology as a replacement for de Rham cohomology theory. In this note, we use topological periodic cyclic homology (a beautiful theory recently introduced by Hesselholt \cite{Hesselholt}) as a replacement for crystalline cohomology theory.
\begin{remark}[Algebraic cycles up to numerical equivalence]
Let $X$ be a smooth proper $k$-scheme. Thanks to the Hirzebruch-Riemann-Roch theorem, the Chern character yields an isomorphism between $K_0(\perf_\dg(X))/_{\!\!\sim \mathrm{num}} \otimes_\bbZ \bbQ$ and the $\bbQ$-vector space of algebraic cycles up to numerical equivalence $\cZ^\ast(X)/_{\!\!\sim \mathrm{num}}\otimes_\bbZ \bbQ$. Therefore, by applying Theorem \ref{thm:main1} to the dg category $\cA=\perf_\dg(X)$, we conclude that the latter $\bbQ$-vector space is finite dimensional. Since $\cZ^\ast(X)/_{\!\!\sim \mathrm{num}}$ is torsion-free, this implies that $\cZ^\ast(X)/_{\!\!\sim \mathrm{num}}$ is a finitely generated free abelian group.
\end{remark}
\begin{notation}
Throughout the note $k$ denotes a finite base field $\bbF_q$ of order $q=p^r$.
\end{notation}

\section{Background on dg categories}\label{sub:dg}
Let $\cC(k)$ the category of complexes of $k$-vector spaces. A {\em differential
  graded (=dg) category $\cA$} is a category enriched over $\cC(k)$
and a {\em dg functor} $F\colon\cA\to \cB$ is a functor enriched over
$\cC(k)$; consult Keller's ICM survey
\cite{ICM-Keller}. In what follows, we write $\dgcat(k)$ for the category of (essentially small) dg categories.

Let $\cA$ be a dg category. The opposite dg category $\cA^\op$, resp. category $\dgHo(\cA)$, has the same objects as $\cA$ and $\cA^\op(x,y):=\cA(y,x)$, resp. $\dgHo(\cA)(x,y):=H^0\cA(x,y)$. A {\em right dg
  $\cA$-module} is a dg functor $M\colon \cA^\op \to \cC_\dg(k)$ with values
in the dg category $\cC_\dg(k)$ of complexes of $k$-vector spaces. Let
us write $\cC(\cA)$ for the category of right dg
$\cA$-modules. Following \cite[\S3.2]{ICM-Keller}, the derived
category $\cD(\cA)$ of $\cA$ is defined as the localization of
$\cC(\cA)$ with respect to the objectwise quasi-isomorphisms. Let
$\cD_c(\cA)$ be the triangulated subcategory of compact objects.

A dg functor $F\colon\cA\to \cB$ is called a {\em Morita equivalence} if it induces an equivalence on derived categories $\cD(\cA) \simeq
\cD(\cB)$; see \cite[\S4.6]{ICM-Keller}. As explained in
\cite[\S1.6]{book}, the category $\dgcat(k)$ admits a Quillen model
structure whose weak equivalences are the Morita equivalences. Let us
denote by $\Hmo(k)$ the associated homotopy category.

The {\em tensor product $\cA\otimes\cB$} of dg categories is defined
as follows: the set of objects is the cartesian product and
$(\cA\otimes\cB)((x,w),(y,z)):= \cA(x,y) \otimes\cB(w,z)$. As
explained in \cite[\S2.3]{ICM-Keller}, this construction gives rise to
a symmetric monoidal structure $-\otimes -$ on $\dgcat(k)$, which descends to the homotopy category
$\Hmo(k)$. 

Finally, a {\em dg $\cA\text{-}\cB$-bimodule} is a dg functor
$\mathrm{B}\colon \cA\otimes \cB^\op \to \cC_\dg(k)$ or, equivalently, a
right dg $(\cA^\op \otimes \cB)$-module. A standard example is the dg
$\cA\text{-}\cB$-bimodule
\begin{eqnarray}\label{eq:bimodule2}
{}_F\mathrm{B}:\cA\otimes \cB^\op \to \cC_\dg(k) && (x,z) \mapsto \cB(z,F(x))
\end{eqnarray}
associated to a dg functor $F:\cA\to \cB$. Let us write $\rep(\cA,\cB)$ for the full triangulated subcategory of $\cD(\cA^\op \otimes \cB)$ consisting of those dg $\cA\text{-}\cB$-modules $\mathrm{B}$ such that for every object $x \in \cA$ the associated right dg $\cB$-module $\mathrm{B}(x,-)$ belongs to $\cD_c(\cB)$.
\section{Noncommutative motives over a finite field}\label{sec:NCmotives}
For a book, resp. survey, on noncommutative motives, consult \cite{book}, resp. \cite{survey}.
\subsection*{Noncommutative Chow motives}
Let $\cA$ and $\cB$ be two dg categories. As explained in \cite[\S1.6.3]{book}, there is a bijection between $\Hom_{\Hmo(k)}(\cA,\cB)$ and the isomorphism classes of the category $\rep(\cA,\cB)$, under which the composition law of $\Hmo(k)$ corresponds to the derived tensor product of bimodules. Since the dg $\cA\text{-}\cB$ bimodules \eqref{eq:bimodule2} belong to $\rep(\cA,\cB)$, we have the symmetric monoidal functor:
\begin{eqnarray}\label{eq:func1}
\dgcat(k) \too \Hmo(k) & \cA\mapsto \cA & F \mapsto {}_F \mathrm{B}\,.
\end{eqnarray}
The {\em additivization} of $\Hmo(k)$ is the additive category $\Hmo_0(k)$ with the same objects as $\Hmo(k)$ and with morphisms given by $\Hom_{\Hmo_0(k)}(\cA,\cB):=K_0\rep(\cA,\cB)$, where $K_0\rep(\cA,\cB)$ stands for the Grothendieck group of the triangulated category $\rep(\cA,\cB)$. The composition law and the symmetric monoidal structure are induced from $\Hmo(k)$. Consequently, we have the following symmetric monoidal functor:
\begin{eqnarray}\label{eq:func2}
\Hmo(k) \too \Hmo_0(k) & \cA \mapsto \cA & \mathrm{B} \mapsto [\mathrm{B}]\,.
\end{eqnarray}
Given a commutative ring of coefficients $R$, the {\em $R$-linearization} of $\Hmo_0(k)$ is the $R$-linear category $\Hmo_0(k)_R$ obtained by tensoring the morphisms of $\Hmo_0(k)$ with $R$. By construction, we have the following symmetric monoidal functor:
\begin{eqnarray}\label{eq:func3}
\Hmo_0(k) \too \Hmo_0(k)_R & \cA \mapsto \cA & [\mathrm{B}] \mapsto [\mathrm{B}]_R\,.
\end{eqnarray}
Let us denote by $U(-)_R\colon \dgcat(k) \to \Hmo_0(k)_R$ the composition $\eqref{eq:func3}\circ \eqref{eq:func2}\circ \eqref{eq:func1}$.

The category of {\em noncommutative Chow motives} $\NChow(k)_R$ is defined as the idempotent completion of the full subcategory of $\Hmo_0(k)_R$ consisting of the objects $U(\cA)_R$ with $\cA$ a smooth proper dg category. By construction, this latter category is $R$-linear, additive, idempotent complete, and symmetric monoidal. Moreover, since the smooth proper dg categories can be characterized as the dualizable objects of the symmetric monoidal category $\Hmo_0(k)$ (see \cite[Thm.~1.43]{book}), the category $\NChow(k)_R$ is moreover {\em rigid}, \ie all its objects are dualizable. Finally, given dg categories $\cA$ and $\cB$, with $\cA$ smooth proper, we have $\rep(\cA,\cB) \simeq \cD_c(\cA^\op \otimes \cB)$; see \cite[Cor.~1.44]{book}. This implies the following isomorphisms:
\begin{equation}\label{eq:Homs}
\Hom_{\NChow(k)_R}(U(\cA)_R,U(\cB)_R):=K_0(\rep(\cA,\cB))_R\simeq K_0(\cA^\op \otimes\cB)_R\,.
\end{equation}
When $R=\bbZ$, we write $\NChow(k)$ instead of $\NChow(k)_\bbZ$ and $U$ instead of $U(-)_\bbZ$.
\subsection*{Topological periodic cyclic homology}\label{sub:TP}
Let $W(k)$ be the ring of $p$-typical Witt vectors of $k$ and $K:=W(k)_{1/p}$ the fraction field of $W(k)$. Thanks to the work of Hesselholt \cite{Hesselholt}, topological periodic cyclic homology $TP$ (which is defined\footnote{Note that periodic cyclic homology $HP$ may also be defined as the Tate cohomology of the circle group acting on Hochschild homology $HH$.} as the Tate cohomology of the circle group acting on topological Hochschild homology $THH$) yields a lax symmetric monoidal functor 
\begin{equation}\label{eq:TP}
TP_\pm(-)_{1/p} \colon \dgcat(k) \too \mathrm{Vect}_{\bbZ/2}(K)
\end{equation} 
with values in the category of $\bbZ/2$-graded $K$-vector spaces.
\begin{theorem}\label{thm:TP}
Given a ring homomorphism $R \to K$, the lax symmetric monoidal functor \eqref{eq:TP} gives rise to an $R$-linear symmetric monoidal functor
\begin{equation}\label{eq:TP1}
TP_\pm(-)_{1/p}\colon \NChow(k)_R \too \mathrm{vect}_{\bbZ/2}(K)
\end{equation}
with values in the category of finite dimensional $\bbZ/2$-graded $K$-vector spaces.
\end{theorem}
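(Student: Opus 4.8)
The plan is to obtain \eqref{eq:TP1} by propagating the lax symmetric monoidal functor \eqref{eq:TP} along the three symmetric monoidal functors \eqref{eq:func1}, \eqref{eq:func2}, \eqref{eq:func3} that define $U(-)_R$, and then to check that on the full subcategory of noncommutative Chow motives the resulting functor is strong symmetric monoidal and takes finite dimensional values. The first step is formal: $TP_\pm(-)_{1/p}$ is an \emph{additive invariant} of dg categories in the sense of \cite{book}. Indeed, topological Hochschild homology $\THH$ is a finitary localizing, hence additive, invariant; the Tate construction $(-)^{tS^1}$ is exact, being the cofiber of the norm map $(-)_{hS^1}\to(-)^{hS^1}$, so it preserves finite direct sums; and inverting $p$ together with passing to $\bbZ/2$-graded homotopy groups is exact as well. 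Since $TP_\pm(-)_{1/p}$ is obtained from $\THH(-)$ by applying $(-)^{tS^1}$, inverting $p$, and taking $\bbZ/2$-graded homotopy, it is therefore Morita invariant and preserves finite direct sums. By the universal property of the additivization $\Hmo_0(k)$, together with its symmetric monoidal enhancement \cite{book}, the functor \eqref{eq:TP} factors through \eqref{eq:func1} and \eqref{eq:func2} as a lax symmetric monoidal additive functor $\overline{TP}\colon\Hmo_0(k)\to\mathrm{Vect}_{\bbZ/2}(K)$; because the target is $R$-linear via $R\to K$ and $\Hom_{\Hmo_0(k)_R}(-,-)=\Hom_{\Hmo_0(k)}(-,-)\otimes_\bbZ R$, this extends uniquely to an $R$-linear lax symmetric monoidal functor $\overline{TP}_R\colon\Hmo_0(k)_R\to\mathrm{Vect}_{\bbZ/2}(K)$. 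Restricting it to the full subcategory spanned by the $U(\cA)_R$ with $\cA$ smooth proper, and using that $\mathrm{Vect}_{\bbZ/2}(K)$ is idempotent complete, yields a functor on $\NChow(k)_R$.

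Two points then remain: that this functor lands in \emph{finite dimensional} $\bbZ/2$-graded $K$-vector spaces, and that it is \emph{strong} (not merely lax) symmetric monoidal. Both rest on one spectrum-level input: for $\cA$ smooth proper, $\THH(\cA)$ is a \emph{perfect} (equivalently dualizable, equivalently compact) module over the $E_\infty$-ring $\THH(k)$. This should follow from the symmetric monoidality of $\THH(-)\colon\dgcat(k)\to\Mod_{\THH(k)}$ — a $\THH$-Künneth equivalence $\THH(\cA\otimes_k\cB)\simeq\THH(\cA)\otimes_{\THH(k)}\THH(\cB)$ — since this symmetric monoidal functor factors through $\Hmo_0(k)$ and so carries dualizable objects to dualizable objects, and the dualizable objects of $\Hmo_0(k)$ are precisely the smooth proper dg categories \cite[Thm.~1.43]{book}. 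Granting this, exactness of $(-)^{tS^1}$ places $\THH(\cA)^{tS^1}=TP(\cA)$ in the thick subcategory generated by $\THH(k)^{tS^1}=TP(k)$, so $TP(\cA)_{1/p}$ is a perfect $TP(k)_{1/p}$-module. By Hesselholt's computation \cite{Hesselholt}, $TP_\ast(k)=TP_\ast(\bbF_q)=W(k)[u^{\pm 1}]$ with $u$ in degree $2$, so $TP_\ast(k)_{1/p}=K[u^{\pm 1}]$ is a $2$-periodic graded field of characteristic zero; consequently every perfect $TP(k)_{1/p}$-module $M$ has $\pi_\ast M$ a finitely generated free $K[u^{\pm 1}]$-module, and $M\mapsto(\pi_0 M,\pi_1 M)$ identifies the category of perfect $TP(k)_{1/p}$-modules with $\mathrm{vect}_{\bbZ/2}(K)$ as symmetric monoidal categories. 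Since $TP_\pm(\cA)_{1/p}$ is precisely $(\pi_0,\pi_1)$ of $TP(\cA)_{1/p}$, it is finite dimensional.

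For the monoidality, the perfectness of $\THH(\cA)$ does the work once more: the lax structure map $TP(\cA)\otimes_{TP(k)}TP(\cB)\to TP(\cA\otimes_k\cB)$ is an equivalence whenever $\cA$ (or $\cB$) is smooth proper, because via the $\THH$-Künneth equivalence this reduces to the statement that $(-)^{tS^1}$ commutes with $-\otimes_{\THH(k)}\THH(\cA)$ when $\THH(\cA)$ is perfect, which is clear by reduction to the free module $\THH(k)$. Under the identification of perfect $TP(k)_{1/p}$-modules with $\mathrm{vect}_{\bbZ/2}(K)$ this makes $\overline{TP}_R$ restricted to $\NChow(k)_R$ strong symmetric monoidal, giving \eqref{eq:TP1}. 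The main obstacle is exactly this spectral input — that $\THH(\cA)$ is a perfect $\THH(k)$-module for $\cA$ smooth proper and that it behaves well under the circle Tate construction, together with importing Hesselholt's computation of $TP_\ast(\bbF_q)$. It is the precise analogue of the classical facts that Hochschild homology of a smooth proper dg category is a perfect complex and that $HP$ is finite dimensional (used in the characteristic zero case of \cite{Separable}), but now the argument must be run one level down, over the sphere spectrum, with $\THH(k)$ in place of $k$; once it is in hand, the remaining verifications are routine.
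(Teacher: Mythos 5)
Your proposal is correct and follows the same overall skeleton as the paper: pass through the universal property of $\Hmo_0(k)_R$ to get a lax symmetric monoidal $R$-linear functor, then argue that on the subcategory spanned by smooth proper dg categories the lax structure maps are equivalences and the outputs are finite dimensional. The difference is in how the two last steps are supplied. The paper treats the strong K\"unneth theorem for $TP$ of smooth proper dg categories as a black box, citing Blumberg--Mandell, and then deduces finite dimensionality purely formally: $\NChow(k)_R$ is rigid, strong symmetric monoidal functors preserve dualizability, and the dualizable objects of $\mathrm{Vect}_{\bbZ/2}(K)$ are the finite dimensional ones. You instead \emph{unwind} the Blumberg--Mandell argument: you assert the strong K\"unneth theorem one level down, for $\THH$ landing in $\THH(k)$-modules, deduce that $\THH(\cA)$ is a perfect (equivalently dualizable) $\THH(k)$-module for smooth proper $\cA$ via \cite[Thm.~1.43]{book}, propagate perfectness through the exact lax symmetric monoidal Tate construction, and then read off both the finite dimensionality (perfect modules over the $2$-periodic graded field $TP_\ast(k)_{1/p}\simeq K[u^{\pm1}]$ are finite free) and the strong monoidality (lax structure maps of a lax symmetric monoidal exact functor are equivalences after tensoring with a dualizable object).

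This is essentially the same argument organized differently: what you buy is a self-contained derivation that shows exactly \emph{why} the Blumberg--Mandell equivalence holds, at the cost of importing two nontrivial facts without precise citation — the strong symmetric monoidality of $\THH\colon\dgcat(k)\to\Mod_{\THH(k)}$ (a genuine theorem, not a formal consequence of lax monoidality; the paper sidesteps it by only needing $\THH$ lax), and the compatibility of $(-)^{tS^1}$ with relative smash products against dualizable Borel $S^1$-equivariant modules. Both are true and known, but as written these are precisely the ingredients the paper encapsulates in its single citation of \cite{BM}. If you were to turn your sketch into a complete proof you would want references for these two points; otherwise your route is sound and, arguably, more illuminating about where the smooth properness hypothesis is actually used.

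Two small remarks. First, your degree convention for the periodicity generator (degree $2$) differs from Hesselholt's in the paper (degree $-2$), which is harmless but worth flagging. Second, note that the paper's Tor spectral sequence step — used to pass from a weak equivalence of $TP(k)_{1/p}$-module spectra to an isomorphism of $\bbZ/2$-graded $K$-vector spaces — is absorbed in your account into the identification of perfect $TP(k)_{1/p}$-modules with $\mathrm{vect}_{\bbZ/2}(K)$, which is a clean reformulation and avoids mentioning the spectral sequence entirely.
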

\begin{proof}
Recall from Bondal-Orlov \cite{BO1} that a {\em semi-orthogonal decomposition} of a triangulated category $\cT$, denoted by $\cT=\langle \cT_1, \cT_2\rangle$, consists of full triangulated subcategories $\cT_1, \cT_2 \subseteq \cT$ satisfying the following conditions: the inclusions $\cT_1, \cT_2 \subseteq \cT$ admit left and right adjoints; the triangulated category $\cT$ is generated by the objects of $\cT_1$ and $\cT_2$; and $\Hom_\cT(\cT_2, \cT_1)=0$. A functor $E\colon \dgcat(k)
\to \mathrm{D}$, with values in an additive category, is called an
{\em additive invariant} if it satisfies the conditions:
\begin{itemize}
\item[(i)] It sends the Morita equivalences to isomorphisms.
\item[(ii)] Given dg categories $\cA,\cC \subseteq \cB$ such that $\dgHo(\cB)=\langle\dgHo(\cA), \dgHo(\cC) \rangle$, the inclusions $\cA, \cC\subseteq \cB$ induce an isomorphism $E(\cA) \oplus E(\cC) \simeq E(\cB)$.
\end{itemize}
As explained in \cite[\S2.3]{book}, the functor $U(-)_R\colon \dgcat(k) \to \Hmo_0(k)_R$ is the {\em universal} additive invariant, \ie given any $R$-linear, additive, and symmetric monoidal category $\mathrm{D}$, we have an induced equivalence of categories
\begin{equation}\label{eq:equivalence}
U(-)_R^\ast \colon \mathrm{Fun}^\otimes_{R\text{-}\mathrm{linear}}(\Hmo_0(k)_R, \mathrm{D}) \stackrel{\simeq}{\too} \mathrm{Fun}^\otimes_{\mathrm{add}}(\dgcat(k), \mathrm{D})\,,
\end{equation}
where the left-hand side denotes the category of $R$-linear lax symmetric monoidal functors and the right-hand side the category of lax symmetric monoidal additive invariants. Since topological Hochschild homology $THH$ is a (lax) symmetric monoidal additive invariant (see \cite[\S2.2.12]{book}), it follows from the exactness of the Tate construction that the above functor \eqref{eq:TP} is also a lax symmetric monoidal additive invariant. Consequently, making use of the ring homomorphism $R \to K$ and of the equivalence of categories \eqref{eq:equivalence}, we conclude that \eqref{eq:TP} gives rise to an $R$-linear lax symmetric monoidal functor 
\begin{equation}\label{eq:TP2}
TP_\pm(-)_{1/p} \colon \Hmo_0(k)_R \too \mathrm{Vect}_{\bbZ/2}(K)\,.
\end{equation}
Given smooth proper dg categories $\cA$ and $\cB$, Blumberg-Mandell \cite{BM} proved that the natural morphism of $TP(k)$-modules $TP(\cA) \wedge_{TP(k)}TP(\cB) \to TP(\cA\otimes \cB)$ is a weak equivalence. Consequently, the induced morphism of $TP(k)_{1/p}$-modules
\begin{equation}\label{eq:TP5}
TP(\cA)_{1/p} \wedge_{TP(k)_{1/p}}TP(\cB)_{1/p} \too TP(\cA\otimes \cB)_{1/p}
\end{equation}
is also a weak equivalence. Making use of the (convergent) Tor spectral sequence
$$ E^2_{i,j} = \mathrm{Tor}_{i,j}^{TP_\ast(k)_{1/p}}(TP_\ast(\cA)_{1/p}, TP_\ast(\cB)_{1/p}) \Rightarrow \mathrm{Tor}_{i+j}^{TP(k)_{1/p}} (TP(\cA)_{1/p}, TP(\cB)_{1/p})$$
and of the computation $TP_\ast(k)_{1/p} \simeq \mathrm{Sym}_K\{v^{\pm 1}\}$, where the divided Bott element $v$ is of degree $-2$ (see \cite[\S4]{Hesselholt}), we hence conclude that \eqref{eq:TP5} yields an isomorphism of $\bbZ/2$-graded $K$-vector spaces $TP_\pm(\cA)_{1/p} \otimes TP_\pm(\cB)_{1/p} \simeq TP_\pm(\cA\otimes \cB)_{1/p}$. By definition of the category of noncommutative Chow motives, this implies that \eqref{eq:TP2} gives rise to an $R$-linear symmetric monoidal functor
\begin{equation}\label{eq:TP6}
TP_\pm(-)_{1/p}\colon \NChow(k)_R \too \mathrm{Vect}_{\bbZ/2}(K)\,.
\end{equation}
Finally, since the category $\NChow(k)_R$ is rigid and the rigid objects of $\mathrm{Vect}_{\bbZ/2}(K)$ are the finite dimensional $\bbZ/2$-graded $K$-vector spaces, the preceding symmetric monoidal functor \eqref{eq:TP6} takes values in $\mathrm{vect}_{\bbZ/2}(K)$. This concludes the proof.
\end{proof}
Let $K_0(\NChow(k)_R)$ and $K_0(\mathrm{vect}_{\bbZ/2}(K))$ be the Grothendieck rings of the additive symmetric monoidal categories $\NChow(k)_R$ and $\mathrm{vect}_{\bbZ/2}(K)$, respectively. Note that we have the following canonical isomorphism:
\begin{eqnarray*}\label{eq:iso}
K_0(\mathrm{vect}_{\bbZ/2}(K))\stackrel{\simeq}{\too} \bbZ[\epsilon]/(\epsilon^2=1) && [(M_+, M_-)]\mapsto \mathrm{dim}_K(M_+) - \mathrm{dim}_K (M_-)\epsilon\,.
\end{eqnarray*}
Since the functor \eqref{eq:TP1} is symmetric monoidal, it induces a ring homomorphism
$$ TP_\pm(-)_{1/p}\colon K_0(\NChow(k)_R) \too K_0(\mathrm{vect}_{\bbZ/2}(K))\simeq \bbZ[\epsilon]/(\epsilon^2 = 1)\,.$$
\begin{proposition}\label{prop:crystalline}
Given a smooth proper $k$-scheme $X$, we have the equality
\begin{equation}\label{eq:TP_new}
TP_\pm([U(\perf_\dg(X))_R])_{1/p} = \sum_n (-1)^n \mathrm{dim}_K (H^n_{\mathrm{crys}}(X)_{1/p})\epsilon^{n/2}\,,
\end{equation}
where $H^\ast_{\mathrm{crys}}(X)$ stands for the crystalline cohomology of $X$.
\end{proposition}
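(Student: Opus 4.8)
The plan is to reduce the proposition to the crystalline comparison theorem for topological periodic cyclic homology. First, one uses that the invariant $TP$ applied to the dg category $\perf_\dg(X)$ recovers the topological periodic cyclic homology $TP(X)$ of the scheme $X$ itself; this is standard, since $THH$ is a localizing invariant that satisfies Nisnevich (indeed étale) descent and agrees on affine schemes with the classical construction, and the Tate construction defining $TP$ is exact (cf.\ \cite{BM}). Second, by construction the ring homomorphism $TP_\pm(-)_{1/p}\colon K_0(\NChow(k)_R)\to K_0(\mathrm{vect}_{\bbZ/2}(K))\simeq\bbZ[\epsilon]/(\epsilon^2=1)$ is simply $K_0$ of the symmetric monoidal functor \eqref{eq:TP1}; hence $TP_\pm([U(\perf_\dg(X))_R])_{1/p}$ is the class of the finite-dimensional $\bbZ/2$-graded $K$-vector space $TP_\pm(\perf_\dg(X))_{1/p}=(TP_0(X)_{1/p},\,TP_1(X)_{1/p})$, which under the displayed isomorphism equals $\dim_K TP_0(X)_{1/p}-(\dim_K TP_1(X)_{1/p})\,\epsilon$. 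So everything comes down to computing these two dimensions.

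The key input I would invoke is the crystalline comparison for $TP$: for $X$ smooth and proper over the finite field $k$, with $W(k)$ the ring of Witt vectors and $K=W(k)_{1/p}$, there is a natural isomorphism of $\bbZ/2$-graded $K$-vector spaces
\begin{equation*}
TP_n(X)_{1/p}\;\simeq\;\bigoplus_{i\in\bbZ}H^{2i-n}_{\mathrm{crys}}(X)_{1/p}\,,
\end{equation*}
the sum being finite because $X$ is finite-dimensional; in particular $TP_0(X)_{1/p}\simeq\bigoplus_{n\ \mathrm{even}}H^n_{\mathrm{crys}}(X)_{1/p}$ and $TP_1(X)_{1/p}\simeq\bigoplus_{n\ \mathrm{odd}}H^n_{\mathrm{crys}}(X)_{1/p}$. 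This is due to Hesselholt \cite{Hesselholt} — whose computation $TP_\ast(k)_{1/p}\simeq\mathrm{Sym}_K\{v^{\pm1}\}$, with the divided Bott element $v$ in degree $-2$, already entered the proof of Theorem~\ref{thm:TP} — and ultimately rests on the Hesselholt--Madsen identification of $TP$ of smooth $k$-schemes with the $p$-inverted de Rham--Witt complex, together with the fact that the latter computes crystalline cohomology; one may equally well phrase the argument through the motivic/Nygaard filtration on $TP(X)$ (Bhatt--Morrow--Scholze, Antieau--Mathew--Morrow--Nikolaus), which rationally degenerates with associated graded the crystalline cohomology of $X$ placed two cohomological degrees apart.

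Combining the two steps, $TP_\pm([U(\perf_\dg(X))_R])_{1/p}$ equals $\sum_{n\ \mathrm{even}}\dim_K H^n_{\mathrm{crys}}(X)_{1/p}-\bigl(\sum_{n\ \mathrm{odd}}\dim_K H^n_{\mathrm{crys}}(X)_{1/p}\bigr)\,\epsilon$; reorganizing the two sums into a single sum over all $n$ and using $\epsilon^2=1$ rewrites this as $\sum_n(-1)^n\dim_K(H^n_{\mathrm{crys}}(X)_{1/p})\,\epsilon^{n/2}$, which is exactly \eqref{eq:TP_new}.

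The substance of the proof is concentrated in the second paragraph: the crystalline comparison theorem for $TP$ is the one nontrivial ingredient, and everything else is formal. The point to be careful about is the bookkeeping of degrees — matching the $2$-periodicity of $TP$ (generated by $v$ in degree $-2$) with the collapse of the cohomological $\bbZ$-grading of $H^\ast_{\mathrm{crys}}(X)$ to a $\bbZ/2$-grading, so that the even, resp.\ odd, crystalline cohomology of $X$ contributes to $TP_0$, resp.\ $TP_1$. Once that is pinned down, the identification $TP(\perf_\dg(X))\simeq TP(X)$ and the resulting equality \eqref{eq:TP_new} are immediate.
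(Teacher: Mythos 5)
Your proposal is correct and follows the same strategy as the paper, but the key comparison with crystalline cohomology is invoked in a different form. In the paper, the dimension count $\dim_K TP_\pm(\perf_\dg(X))_{1/p}=\sum_{n\ \mathrm{even/odd}}\dim_K H^n_{\mathrm{crys}}(X)_{1/p}$ is \emph{derived} by juxtaposing two spectral sequences from Hesselholt's paper: the conjugate spectral sequence $\lim_{n,\mathrm{Frob}}H^i(X,W_n\Omega_X^j)_{1/p}\Rightarrow H^{i+j}_{\mathrm{crys}}(X)_{1/p}$ and the Hodge spectral sequence $\bigoplus_m\lim_{n,\mathrm{Frob}}H^{-i}(X,W_n\Omega_X^{j+2m})_{1/p}\Rightarrow TP_{i+j}(\perf_\dg(X))_{1/p}$, together with the observation that the Hodge spectral sequence's differentials preserve the direct-sum-over-$m$ decomposition, so that each summand recovers (up to regrading) the conjugate spectral sequence for a fixed crystalline degree. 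You instead invoke a direct isomorphism of $\bbZ/2$-graded $K$-vector spaces $TP_n(X)_{1/p}\simeq\bigoplus_i H^{2i-n}_{\mathrm{crys}}(X)_{1/p}$, attributed to Hesselholt's de~Rham--Witt identification or, alternatively, to the BMS/AMMN motivic filtration on $TP$, and read off the dimensions from that. The two routes yield identical conclusions — only dimensions enter the class in $K_0(\mathrm{vect}_{\bbZ/2}(K))$, so a filtration with the right associated graded is as good as a splitting. Your phrasing is the cleaner black-box citation, whereas the paper's version is more explicit about exactly which of Hesselholt's theorems (5.1 and 6.8) are being combined, and avoids asserting a comparison isomorphism that requires a degeneration argument beyond the bare spectral sequences. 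The remaining steps you describe — identifying the ring map as $K_0$ of the symmetric monoidal functor~\eqref{eq:TP1}, using $TP(\perf_\dg(X))\simeq TP(X)$, and the $\bbZ/2$-graded bookkeeping — match the paper and are sound.
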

Roughly speaking, Proposition \ref{prop:crystalline} shows that ``virtually'' the topological periodic cyclic homology of $X$ agrees with the crystalline cohomology of $X$.
\begin{proof}
On the one hand, as proved by Hesselholt in \cite[Thm.~5.1]{Hesselholt}, we have the (convergent) conjugate spectral sequence
\begin{equation}\label{eq:spectral1}
 E_2^{i,j} = \mathrm{lim}_{n, \mathrm{Frob}} \,H^i(X, W_n\, \Omega_X^j)_{1/p} \Rightarrow H^{i+j}_{\mathrm{crys}}(X)_{1/p}\,,
 \end{equation}
in which the $E_2$-terms are given by hypercohomology with coefficients in the de Rham-Witt complex. On the other hand, as proved once again by Hesselholt in \cite[Thm.~6.8]{Hesselholt}, we have the (convergent) Hodge spectral sequence
\begin{equation}\label{eq:spectral2}
E^2_{i,j} = \bigoplus_{m \in \bbZ} \mathrm{lim}_{n, \mathrm{Frob}}\,H^{-i}(X, W_n\, \Omega_X^{j+2m})_{1/p} \Rightarrow TP_{i+j}(\perf_\dg(X))_{1/p}\,,
\end{equation}
in which the differentials preserve the direct sum decomposition of the $E^2$-term. This implies that the dimension of the $K$-vector space $TP_+(\perf_\dg(X))_{1/p}$, resp. $TP_-(\perf_\dg(X))_{1/p}$, is equal to the (finite) sum $\sum_{n \,\mathrm{even}} \mathrm{dim}_K (H^n_{\mathrm{crys}}(X)_{1/p})$, resp. $\sum_{n \,\mathrm{odd}} \mathrm{dim}_K (H^n_{\mathrm{crys}}(X)_{1/p})$. Consequently, we obtain the above equality \eqref{eq:TP_new}.
%
%
\end{proof}
\subsection*{Noncommutative numerical motives}
Given an $R$-linear, additive, rigid symmetric monoidal category $(\mathrm{D}, \otimes, {\bf 1})$, its {\em $\cN$-ideal} is defined as follows
$$ \cN(a,b):=\{f \in \Hom_{\mathrm{D}}(a,b)\,|\, \forall g \in \Hom_{\mathrm{D}}(b,a)\,\,\mathrm{we}\,\,\mathrm{have}\,\,\mathrm{tr}(g\circ f)=0 \}\,,$$
where $\mathrm{tr}(g\circ f)$ stands for the categorical trace of the endomorphism $g\circ f$. The category of {\em noncommutative numerical motives} $\NNum(k)_R$ is defined as the idempotent completion of the quotient of $\NChow(k)_R$ by the $\otimes$-ideal $\cN$. By construction, this category  is $R$-linear, additive, idempotent complete, and rigid symmetric monoidal. 
\begin{theorem}[Semi-simplicity]\label{thm:semi}
When $R$ is a field of characteristic zero, the category of noncommutative numerical motives $\NNum(k)_R$ is abelian semi-simple.
\end{theorem}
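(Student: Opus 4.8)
The plan is to follow the classical strategy, due to Jannsen in the commutative setting and adapted to the noncommutative world in the author's earlier work, of producing a $\otimes$-functor from noncommutative Chow motives to a semi-simple category that "detects numerical equivalence." The abstract input is the following: if $(\mathrm{D}, \otimes, \mathbf{1})$ is an $R$-linear, additive, rigid symmetric monoidal category with $R$ a field of characteristic zero, and if there exists an $R$-linear $\otimes$-functor $H \colon \mathrm{D} \to \mathrm{D}'$ to some $R'$-linear rigid symmetric monoidal category (with $R \to R'$ a field extension) such that $H$ is "faithful enough" — concretely, such that the kernel $\otimes$-ideal of $H$ is contained in $\cN$ — then the quotient $\mathrm{D}/\cN$ is abelian semi-simple. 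This is the noncommutative counterpart of \cite[Thm.~1.12]{Separable} / the Jannsen-style argument; I would invoke it as the formal backbone. So the real content is: exhibit a suitable Weil-type $\otimes$-functor on $\NChow(k)_R$ and prove its kernel lands inside $\cN$.

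First I would fix, via Theorem \ref{thm:TP}, the symmetric monoidal functor $TP_\pm(-)_{1/p} \colon \NChow(k)_R \to \mathrm{vect}_{\bbZ/2}(K)$ coming from the ring homomorphism $R \to K$ (here one uses that $R$, a characteristic-zero field, admits such a homomorphism; if necessary one first enlarges $R$ so that it embeds into $K = W(k)_{1/p}$, or more precisely one base-changes along $\bbQ \hookrightarrow K$ and keeps track of the finitely generated sub-structures — the standard descent argument shows semi-simplicity descends along field extensions). The category $\mathrm{vect}_{\bbZ/2}(K)$ of finite-dimensional $\bbZ/2$-graded $K$-vector spaces is not itself semi-simple as a symmetric monoidal category in the naive Tannakian sense because of the sign subtlety in the $\bbZ/2$-grading, but after passing to the associated "super" trace it behaves exactly like the target of a Weil cohomology: the categorical trace of an endomorphism $f$ of $U(\cA)_R$ computes $\mathrm{tr}(f_+ \mid TP_+) - \mathrm{tr}(f_- \mid TP_-)$, an honest element of $K$. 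The key step is then to show: if $f \in \Hom_{\NChow(k)_R}(U(\cA)_R, U(\cB)_R)$ satisfies $TP_\pm(f)_{1/p} = 0$, then $f \in \cN(U(\cA)_R, U(\cB)_R)$. Conversely — and this is the crucial half — that the $\cN$-ideal is not "too big": one needs that $TP_\pm$ descends to a \emph{faithful} functor on $\NNum(k)_R$. Following Jannsen's argument, faithfulness on the numerical quotient is automatic once one knows the bilinear form induced on $\Hom$-groups by the categorical trace is non-degenerate modulo $\cN$ — which is the definition of $\cN$ — and that this form is "computed" by a functor to a category where traces of nonzero idempotents are nonzero. That last property is where $TP_\pm(-)_{1/p}$ enters: by Proposition \ref{prop:crystalline}, for $\cA = \perf_\dg(X)$ the functor recovers (virtually) crystalline cohomology, whose rank is strictly positive on nonzero motives, and by the universal property / additivity one bootstraps from smooth proper schemes (more precisely from the generators of $\NChow(k)_R$) to all smooth proper dg categories.

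The main obstacle, I expect, is the sign/super-grading issue in $\mathrm{vect}_{\bbZ/2}(K)$: the naive functor $TP_\pm$ does not obviously have the property that nonzero idempotents have nonzero trace, because an idempotent $e$ could have $\dim_K(e_+ \cdot TP_+) = \dim_K(e_- \cdot TP_-)$, giving categorical trace $0$ while $e \neq 0$. The fix is the standard one (as in the commutative case of numerical motives over finite fields): either one works with the $\bbZ$-graded refinement coming from the Hodge/conjugate spectral sequences of Proposition \ref{prop:crystalline} — which produces an honest $\bbN$-graded, hence "positive," structure — or one invokes that on the subcategory generated by smooth proper \emph{schemes} the relevant positivity (non-negativity of the diagonal components of the Künneth projectors) is guaranteed by Weil-cohomology formalism, and then notes that $\NChow(k)_R$ is generated, up to direct summands and tensor products, by such objects together with their duals and twists. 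One then runs the Jannsen dichotomy: the radical of the (finite-dimensional, since $\Hom$-groups in $\NChow(k)_R$ become finite-dimensional after the numerical quotient by Theorem \ref{thm:main1}) endomorphism algebra $\End_{\NNum(k)_R}(N)$ of any object $N$ is nilpotent and trace-zero, hence contained in $\cN$, hence zero in $\NNum(k)_R$; so every such endomorphism algebra is semi-simple, and a rigid symmetric monoidal category with all endomorphism algebras finite-dimensional semi-simple over a characteristic-zero field is abelian semi-simple. Assembling these pieces — Theorem \ref{thm:main1} for finite-dimensionality, Theorem \ref{thm:TP} and Proposition \ref{prop:crystalline} for the positivity input, and Jannsen's formal argument — completes the proof.
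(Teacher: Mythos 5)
Your plan correctly identifies the two real ingredients — the symmetric monoidal functor $TP_\pm(-)_{1/p}$ from Theorem \ref{thm:TP} as the Weil-type functor, and the reduction to $R=\bbQ$ via descent along field extensions — and the general shape (a Jannsen/Andr\'e--Kahn-style formal argument) is right. But there are two genuine problems.

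First, your argument is circular. You invoke Theorem \ref{thm:main1} to get finite-dimensionality of the numerical $\Hom$-groups, but in the paper Theorem \ref{thm:main1} is \emph{deduced from} Theorem \ref{thm:semi}: the proof there reduces $K_0(\cA)/_{\!\!\sim\mathrm{num}}\otimes_\bbZ\bbQ$ to a $\Hom$-group in $\NNum(k)_\bbQ$ and then cites semi-simplicity. You cannot use it as an input here. Second, and more importantly, the ``super-grading obstacle'' you spend the second half of the proposal circumventing is a red herring, and the fix you propose (passing to a $\bbZ$-graded refinement, or importing positivity of K\"unneth components from Weil-cohomology formalism for schemes and bootstrapping) is much harder than what is actually needed, and is nowhere near being justified for general smooth proper dg categories. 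The criterion the paper uses, Theorem \ref{thm:AK} (Andr\'e--Kahn), asks only that (i) $\Hom$-groups in the \emph{target} category be finite-dimensional, and (ii) every \emph{nilpotent} endomorphism in the target have trivial categorical trace. Condition (i) is automatic for $\mathrm{vect}_{\bbZ/2}(K)$, and condition (ii) is immediate because the super-trace of a nilpotent endomorphism of a finite-dimensional $\bbZ/2$-graded vector space is $\mathrm{tr}(f_+)-\mathrm{tr}(f_-)=0-0$. No claim about idempotents having nonzero trace is required; the Andr\'e--Kahn theorem is precisely the refinement of Jannsen's argument that eliminates the positivity input you are trying to re-derive. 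Once you also record that $\End_{\NChow(k)_\bbQ}(U(k)_\bbQ)\simeq\bbQ$ (needed as a hypothesis of Theorem \ref{thm:AK}, via \eqref{eq:Homs}), the proof is complete with no further work.
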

Assuming certain (polarization) conjectures, Kontsevich conjectured in his seminal talk \cite{IAS} that the category of noncommutative numerical motives $\NNum(k)_R$ was abelian semi-simple. Theorem \ref{thm:semi} shows that Kontsevich's insight holds unconditionally. The analogue of Theorem \ref{thm:semi}, with $k$ and $R$ fields of the same characteristic, was proved in \cite[Thm.~5.6]{JEMS} and \cite[Thm.~1.10]{AJM}.
\begin{proof}
Let $R'/R$ be a field extension.  As proved in \cite[Lem.~4.29]{book}, if the category $\NNum(k)_R$ is abelian semi-simple, then the category $\NNum(k)_{R'}$ is also semi-simple. Hence, we can assume without loss of generality that $R=\bbQ$. Since the fraction field $K:=W(k)_{1/p}$ is of characteristic zero, Theorem \ref{thm:TP} provides us with a $\bbQ$-linear symmetric monoidal functor 
\begin{equation}\label{eq:TP7}
TP_\pm(-)_{1/p} \colon \NChow(k)_\bbQ \too \mathrm{vect}_{\bbZ/2}(K)\,.
\end{equation}
Clearly, the $K$-linear category $\mathrm{vect}_{\bbZ/2}(K)$ is rigid symmetric monoidal and satisfies conditions (i)-(ii) of Theorem \ref{thm:AK} below. Moreover, following \eqref{eq:Homs}, we have an isomorphism $\mathrm{End}_{\NChow(k)_\bbQ}(U(k)_\bbQ)\simeq \bbQ$. Therefore, making use of Theorem \ref{thm:AK} below (with $H$ given by the symmetric monoidal functor \eqref{eq:TP7}), we conclude that the category $\NNum(k)_\bbQ$ is abelian semi-simple. 
\end{proof}
\begin{theorem}{(Andr\'e-Kahn \cite[Thm.~1]{AK1})}\label{thm:AK}
Let $R$ be a field and $(\mathrm{D},\otimes, {\bf 1})$ an $R$-linear, additive, rigid symmetric monoidal category with $\mathrm{End}_{\mathrm{D}}({\bf 1})\simeq R$. Assume that there exists a symmetric monoidal functor $H\colon \mathrm{D} \to \mathrm{D}'$ with values in a $R'$-linear rigid symmetric monoidal category (with $R'/R$ a field extension) such that:
\begin{itemize}
\item[(i)] We have $\mathrm{dim}_{R'} \Hom_{\mathrm{D}'}(a,b)< \infty$ for any two objects $a$ and $b$.
\item[(ii)] Every nilpotent endomorphism in $\mathrm{D}'$ has a trivial categorical trace.
\end{itemize}   
Under these assumptions, the idempotent completion of the quotient of $\mathrm{D}$ by the $\otimes$-ideal $\cN$ is an abelian semi-simple category.
\end{theorem}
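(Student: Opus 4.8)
The plan is to reduce the statement, following Andr\'e--Kahn, to a question about endomorphism algebras. Write $\overline{\mathrm D}$ for the idempotent completion of the quotient $\mathrm D/\cN$. First I would record the standard structural facts: by cyclicity of the categorical trace in a rigid symmetric monoidal category, the trace pairing $(f,g)\mapsto\mathrm{tr}(g\circ f)$ on $\End_{\mathrm D}(a)$ is symmetric and associative, $\cN(a,a)$ is exactly its radical, $\cN$ is a $\otimes$-ideal, and hence $\mathrm D/\cN$ and $\overline{\mathrm D}$ are again $R$-linear, additive, rigid symmetric monoidal; moreover $\End_{\overline{\mathrm D}}({\bf 1})\simeq R$ since $\cN({\bf 1},{\bf 1})=0$ because $R$ is a field. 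The key reduction is the (standard) fact that an $R$-linear, additive, idempotent-complete category all of whose endomorphism algebras are finite-dimensional semisimple $R$-algebras is automatically abelian and semisimple; since a corner $e\,\End_{\mathrm D/\cN}(a)\,e$ of a finite-dimensional semisimple algebra is again finite-dimensional and semisimple, it suffices to prove that $A:=\End_{\overline{\mathrm D}}(a)=\End_{\mathrm D}(a)/\cN(a,a)$ is a finite-dimensional semisimple $R$-algebra for every object $a$ of $\mathrm D$.

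For finite-dimensionality I would exploit $H$. Being symmetric monoidal, $H$ preserves categorical traces of endomorphisms of dualizable objects and carries ${\bf 1}$ to ${\bf 1}$, inducing the inclusion $R\hookrightarrow R'$ on endomorphisms of the unit; hence, after extending scalars along $R\hookrightarrow R'$, the trace pairing $\tau$ on $\End_{\mathrm D}(a)$ becomes the pullback along the induced $R'$-linear map of the trace pairing on the finite-dimensional $R'$-algebra $\End_{\mathrm D'}(H(a))$. The radical of the latter form has codimension at most $\dim_{R'}\End_{\mathrm D'}(H(a))<\infty$ by hypothesis (i), so the rank of $\tau\otimes_R R'$---equivalently the rank of $\tau$, equivalently $\dim_R A$---is finite. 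By construction the trace pairing on $A$ is non-degenerate.

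The crux is the following, which I will deduce from $H$ together with (ii): \emph{if $u\in\End_{\mathrm D}(a)$ satisfies $u^{n}\in\cN(a,a)$ for some $n\ge1$, then $\mathrm{tr}(u)=0$}. Pairing $u^{n}$ against $u^{k-n}$ yields $\mathrm{tr}(u^{k})=0$ for all $k\ge n$; applying $H$ (compatible with traces and with $R\hookrightarrow R'$) gives $\mathrm{tr}_{\mathrm D'}(v^{k})=0$ for all $k\ge n$, where $v:=H(u)$ lies in the finite-dimensional $R'$-algebra $\End_{\mathrm D'}(H(a))$. Decomposing the finite-dimensional commutative subalgebra $R'[v]$ via the Chinese Remainder Theorem into a part where $v$ is nilpotent and a part where $v$ acts invertibly, I write $v=v_{\mathrm{nil}}+v_{\mathrm{inv}}$ with $v_{\mathrm{nil}},v_{\mathrm{inv}}$ polynomials in $v$, $v_{\mathrm{nil}}v_{\mathrm{inv}}=0$, $v_{\mathrm{nil}}$ nilpotent, and multiplication by $v_{\mathrm{inv}}$ a bijection of the finite-dimensional span $T$ of $\{v_{\mathrm{inv}}^{k}:k\ge1\}$; in particular $v^{k}=v_{\mathrm{inv}}^{k}$ for all large $k$. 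Then $\mathrm{tr}_{\mathrm D'}(v_{\mathrm{nil}})=0$ by (ii); and since $\{v_{\mathrm{inv}}^{k}:k\ge m\}$ spans $T$ for every $m$ while $\mathrm{tr}_{\mathrm D'}$ annihilates $v_{\mathrm{inv}}^{k}=v^{k}$ for all large $k$, we get $\mathrm{tr}_{\mathrm D'}(v_{\mathrm{inv}})=0$, hence $\mathrm{tr}(u)=\mathrm{tr}_{\mathrm D'}(v)=0$.

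Finally, semisimplicity of $A$ follows quickly: the Jacobson radical $\mathrm{rad}(A)$ of the finite-dimensional algebra $A$ is a nilpotent two-sided ideal, and if $0\ne x\in\mathrm{rad}(A)$ then non-degeneracy of the trace pairing on $A$ gives $y$ with $\mathrm{tr}(xy)\ne0$; but $xy\in\mathrm{rad}(A)$ is a nilpotent endomorphism of $a$ in $\overline{\mathrm D}$, so (using the compatibility of the categorical trace with the Karoubi envelope and with the projection $\mathrm D\to\mathrm D/\cN$, and lifting to $\mathrm D$) the displayed statement forces $\mathrm{tr}(xy)=0$---a contradiction. Hence $\mathrm{rad}(A)=0$, so $A$ is semisimple, and the reduction of the first paragraph concludes the proof. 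I expect the main obstacle to be precisely this ``nilpotence $\Rightarrow$ trivial trace'' step: an endomorphism $u$ with $u^{n}\in\cN$ need not be nilpotent in $\mathrm D$, and $H(u)$ need not be nilpotent in $\mathrm D'$, so (ii) does not apply directly, and it is the decomposition of $H(u)$ into nilpotent and invertible parts---together with the fact that a trace functional vanishing on all sufficiently high powers of a ``unit'' vanishes on that unit---that makes the argument work.
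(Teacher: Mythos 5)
The paper does not actually prove Theorem~\ref{thm:AK}: it is cited wholesale from Andr\'e--Kahn \cite{AK1} and used as a black box, so there is no in-paper proof to compare against. Your reconstruction is correct and follows the same strategy as Andr\'e--Kahn's argument: reduce to showing each endomorphism ring of $\mathrm D/\cN$ is a finite-dimensional semisimple $R$-algebra (using the standard lemma that an $R$-linear, additive, idempotent-complete category with such endomorphism rings is abelian semisimple, and that corners of such algebras are again such); obtain finite-dimensionality by pulling back the trace form along $H$ into the finite-dimensional algebra $\End_{\mathrm D'}(H(a))$; and establish semisimplicity via non-degeneracy of the induced trace form together with the key lemma that $u^n\in\cN(a,a)$ forces $\mathrm{tr}(u)=0$. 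Your handling of that key lemma---noting that $H(u)$ need not be nilpotent, decomposing it inside the finite-dimensional commutative algebra $R'[H(u)]$ into commuting nilpotent and ``invertible'' pieces, killing the nilpotent piece by hypothesis (ii), and killing the invertible piece because a linear functional vanishing on all sufficiently high powers of a unit of a finite-dimensional algebra vanishes on the whole unital subalgebra those powers span---is exactly the delicate point of the Andr\'e--Kahn erratum, and you have identified and resolved it correctly.
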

\begin{remark}[Semi-simplicity of the category of numerical motives]
Let $R$ be a field of characteristic zero and $\Chow(k)_R$ and $\Num(k)_R$ the (classical) categories of Chow motives and numerical motives, respectively. Making use of \'etale cohomology, Jannsen proved in \cite{Jannsen} that the category $\Num(k)_R$ is abelian semi-simple, thus solving a conjecture of Grothendieck. Here is an alternative proof of this important result: as explained in \cite[\S4.2]{book}, there exists a $\bbQ$-linear, fully-faithful, symmetric monoidal functor $\Phi$ making the following diagram commute
\begin{equation}\label{eq:bridge}
\xymatrix@C=2.5em@R=1.5em{
\mathrm{SmProp}(k)^\op \ar[rr]^-{X \mapsto \perf_\dg(X)} \ar[d]_-{\mathfrak{h}(-)_\bbQ} && \dgcat_{\mathrm{sp}}(k) \ar[dd]^-{U(-)_\bbQ} \\
\Chow(k)_\bbQ \ar[d]_-\pi & & \\
\Chow(k)_\bbQ/_{\!\!- \otimes \bbQ(1)} \ar[rr]_-\Phi && \NChow(k)_\bbQ \,,
}
\end{equation}
where $\mathrm{SmProp}(k)$, resp. $\dgcat_{\mathrm{sp}}(k)$, stands for the category of smooth proper $k$-schemes, resp. for the category of smooth proper dg categories, and $\Chow(k)_\bbQ/_{\!\!-\otimes \bbQ(1)}$ for the orbit category with respect to the Tate motive $\bbQ(1)$. By applying the above Theorem \ref{thm:AK} to the following composition
$$ \Chow(k)_\bbQ \stackrel{\pi}{\too} \Chow(k)_\bbQ/_{\!\!- \otimes \bbQ(1)} \stackrel{\Phi}{\too} \NChow(k)_\bbQ \stackrel{\eqref{eq:TP7}}{\too} \mathrm{vect}_{\bbZ/2}(K)\,,$$
we hence conclude that the category $\Num(k)_\bbQ$ (defined as the idempotent completion of the quotient $\Chow(k)_\bbQ/\cN$) is abelian semi-simple. Similarly to the proof of Theorem \ref{thm:semi}, the general case follows now from \cite[Lem.~4.29]{book}.
\end{remark}
\begin{remark}[Noncommutative motivic Galois group]
Similarly to \cite[Chap.~6]{book}, making use of Theorems \ref{thm:TP} and \ref{thm:semi} and of the Tannakian (super-)formalism, we can attach to the abelian semi-simple category of noncommutative numerical motives a noncommutative motivic Galois (super-)group. The study of the properties of this affine group (super-)scheme is the subject of current research.
\end{remark}
\begin{remark}[Jacobian]
Similarly to \cite[Chap.~7]{book}, making use of Theorem \ref{thm:semi} and of the extension of the above bridge \eqref{eq:bridge} to numerical motives (see \cite[\S4.6]{book}), we can construct a ``Jacobian'' functor ${\bf J}(-)\colon \dgcat_{\mathrm{sp}}(k) \to \mathrm{Ab}(k)_\bbQ$ with values in the category of abelian varieties up to isogeny. In the case of a smooth proper $k$-curve $C$ and smooth proper $k$-surface $S$, we have ${\bf J}(\perf_\dg(C))\simeq J(C)$ and ${\bf J}(\perf_\dg(S))\simeq \mathrm{Pic}^0(S) \times \mathrm{Alb}(S)$, respectively. The study of the properties of the ``Jacobian'' functor ${\bf J}(-)$ is the subject of current research.
\end{remark}
\subsection*{Zeta functions of endomorphisms}
Let $N\!\!M \in \NChow(k)_\bbQ$ be a noncommutative Chow motive and $f\in \mathrm{End}_{\NChow(k)_\bbQ}(N\!\!M)$ an endomorphism. Following Kahn \cite[Def.~3.1]{Zeta}, the {\em zeta function of $f$} is defined as the following formal power series
\begin{equation}\label{eq:zeta}
Z(f;t):= \mathrm{exp}(\sum_{n \geq 1} \mathrm{tr}(f^{\circ n}) \frac{t^n}{n}) \in \bbQ\llbracket t \rrbracket\,,
\end{equation}
where $f^{\circ n}$ stands for the composition of $f$ with itself $n$-times, $\mathrm{tr}(f^{\circ n}) \in \bbQ$ for the categorical trace\footnote{Let $(\mathrm{D}, \otimes, {\bf 1})$ be a rigid symmetric monoidal category. Given an endomorphism $f\colon a \to a$, recall that its {\em categorical trace $\mathrm{tr}(f)$} is defined as ${\bf 1} \stackrel{\mathrm{co}}{\to} a^\vee \otimes a \simeq a \otimes a^\vee \stackrel{f\otimes \id}{\to} a\otimes a^\vee \stackrel{\mathrm{ev}}{\to} {\bf 1}$, where $\mathrm{co}$ stands for the co-evaluation morphism and $\mathrm{ev}$ for the evaluation morphism.} of $f^{\circ n}$, and $\mathrm{exp}(t):=\sum_{m \geq 0} \frac{t^m}{m!} \in \bbQ\llbracket t\rrbracket$. When $N\!\!M = U(\cA)_\bbQ$, with $\cA$ a smooth proper dg category, and $f=[\mathrm{B}]_\bbQ$, with $\mathrm{B} \in \cD_c(\cA^\op \otimes \cA)$ a dg $\cA\text{-}\cA$-bimodule, we have the following computation (see \cite[Prop.~2.26]{book})
\begin{equation}\label{eq:integers}
\mathrm{tr}(f^{\circ n}) =[HH(\cA, \underbrace{\mathrm{B}\otimes^{\bf L}_\cA \cdots \otimes^{\bf L}_\cA \mathrm{B}}_{n\text{-}\text{times}})] \in K_0(k) \simeq \bbZ\,,
\end{equation}
where $HH(\cA, \mathrm{B}\otimes^{\bf L}_\cA \cdots \otimes^{\bf L}_\cA \mathrm{B})$ stands for the Hochschild homology of $\cA$ with coefficients in $\mathrm{B}\otimes^{\bf L}_\cA \cdots \otimes^{\bf L}_\cA \mathrm{B}$. Intuitively speaking, the integer \eqref{eq:integers} is the ``number of fixed points'' of the dg $\cA\text{-}\cA$-bimodule $\mathrm{B}\otimes^{\bf L}_\cA \cdots \otimes^{\bf L}_\cA \mathrm{B}$.
\begin{example}[Frobenius]
When $N\!\!M= U(\perf_\dg(X))_\bbQ$, with $X$ a smooth proper $k$-scheme, and $f=[{}_{\mathrm{Fr}^\ast}\mathrm{B}]_\bbQ$, with $\mathrm{Fr}^\ast\colon \perf_\dg(X) \to \perf_\dg(X)$ the pull-back dg functor associated to the Frobenius $\mathrm{Fr}$ of $X$, we have $\mathrm{tr}(f^{\circ n}) = \langle \Delta \cdot \Gamma_{\mathrm{Fr}^n}\rangle=~|X(\bbF_{q^n})|$. Consequently, we conclude that $Z(f;t)= Z_X(t):= \prod_{x \in X} (1-t^{\mathrm{deg}(x)})^{-1}$, where the product is taken over the closed points $x$ of $X$.
\end{example}
\begin{remark}[Witt vectors]
Recall from \cite{Hazewinkel} the definition of the ring of (big) Witt vectors $\mathrm{W}(\bbQ)=(1 + t \bbQ\llbracket t \rrbracket, \times, \ast)$. Since the leading term of \eqref{eq:zeta} is equal to $1$, the zeta function $Z(f;t)$ belongs to $\mathrm{W}(\bbQ)$. Moreover, given endomorphisms $f \in \mathrm{End}(N\!\!M)$ and $f' \in \mathrm{End}(N\!\!M')$, we have $Z(f\oplus f';t)=Z(f;t) \times Z(f';t)$ and $Z(f\otimes f; t) = Z(f;t) \ast Z(f';t)$ in the ring of Witt vectors $\mathrm{W}(\bbQ)$.
\end{remark}
Let $B = \prod_i B_i$ be a finite dimensional semi-simple $\bbQ$-algebra. Following Kahn \cite[\S1]{Zeta}, let us write $Z_i$ for the center of $B_i$, $\delta_i$ for the degree $[Z_i:\bbQ]$, and $d_i$ for the index $[B_i: Z_i]^{1/2}$. Given a unit $b \in B^\times$, the {\em $i^{\mathrm{th}}$ reduced norm} $\mathrm{Nrd}_i(b) \in \bbQ$ is defined as the composition $(\mathrm{N}_{Z_i/\bbQ}\circ \mathrm{Nrd}_{B_i/Z_i})(b_i)$. Thanks to Theorem \ref{thm:semi}, $B:=\mathrm{End}_{\NNum(k)_\bbQ}(N\!\!M)$ is a finite dimensional semi-simple $\bbQ$-algebra; let us write $e_i \in B$ for the central idempotent corresponding to the summand $B_i$. Given an invertible endomorphism $f \in \End_{\NChow(k)_\bbQ}(N\!\!M)^\times$, its {\em determinant $\mathrm{det}(f) \in \bbQ$} is defined as the product $\prod_i \mathrm{Nrd}_i(f)^{\mu_i}$, where $\mu_i \in \bbZ$ stands for the quotient $\frac{\mathrm{tr}(e_i)}{\delta_i d_i}$.
\begin{theorem}\label{thm:zeta}
\begin{itemize}
\item[(i)] Given an embedding $\iota\colon K \hookrightarrow \bbC$, we have the following equality of formal power series with $\bbC$-coefficients: 
\begin{equation}\label{eq:zeta1}
 Z(f;t) = \frac{\mathrm{det}(\id - t \,TP_-(f)_{1/p}\,|\, TP_-(N\!\!M)_{1/p} \otimes_{K, \iota} \bbC)}{\mathrm{det}(\id - t\, TP_+(f)_{1/p}\,|\, TP_+(N\!\!M)_{1/p} \otimes_{K, \iota} \bbC)}\,.
\end{equation}
\item[(ii)] The formal power series $Z(f;t) \in \bbQ \llbracket t \rrbracket$ is {\em rational}, \ie it can be written as $\frac{p(t)}{q(t)}$ with $p(t), q(t) \in \bbQ[t]$. Moreover, we have $\mathrm{deg}(q(t)) - \mathrm{deg}(p(t))= \mathrm{tr}(\id_{N\!\!M})$.  
\item[(iii)] When $f$ is invertible, we have the following functional equation:
$$ Z(f^{-1};t^{-1}) = (-t)^{\mathrm{tr}(\id_{N\!\!M})} \mathrm{det}(f) Z(f;t)\,.$$
\end{itemize}
\end{theorem}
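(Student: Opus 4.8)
The plan is to deduce all three statements from Theorem~\ref{thm:TP}, that is, from the $\bbQ$-linear symmetric monoidal realization $TP_\pm(-)_{1/p}\colon\NChow(k)_\bbQ\to\mathrm{vect}_{\bbZ/2}(K)$, in the spirit of Kahn's formalism \cite{Zeta}. For part~(i), the main point is that a (strong) symmetric monoidal functor between rigid symmetric monoidal categories preserves categorical traces. The categorical trace of an endomorphism $(g_+,g_-)$ of an object $(V_+,V_-)$ of $\mathrm{vect}_{\bbZ/2}(K)$ is the supertrace $\mathrm{tr}(g_+)-\mathrm{tr}(g_-)$, and the induced map $\bbQ\simeq\End_{\NChow(k)_\bbQ}({\bf 1})\to\End_{\mathrm{vect}_{\bbZ/2}(K)}({\bf 1})\simeq K$ is the structural inclusion (hence, via $\iota$, the inclusion $\bbQ\subseteq\bbC$). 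Writing $A_\pm:=TP_\pm(f)_{1/p}$ and $V_\pm:=TP_\pm(N\!\!M)_{1/p}$, this gives $\mathrm{tr}(f^{\circ n})=\mathrm{tr}(A_+^{\circ n}\,|\,V_+)-\mathrm{tr}(A_-^{\circ n}\,|\,V_-)$ inside $\bbC$ for every $n\geq 1$. Substituting into \eqref{eq:zeta} and using the classical identity $\exp\bigl(\sum_{n\geq1}\mathrm{tr}(g^{\circ n})\tfrac{t^n}{n}\bigr)=\det(\id-tg)^{-1}$, valid for an endomorphism $g$ of a finite dimensional vector space, separately for $A_+$ and $A_-$, the two contributions recombine into \eqref{eq:zeta1}.

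Part~(ii) follows from \eqref{eq:zeta1} in two steps. Equality \eqref{eq:zeta1} already exhibits $Z(f;t)$ as a ratio of polynomials with coefficients in $\iota(K)\subseteq\bbC$; since a formal power series with $\bbQ$-coefficients that becomes rational over a field extension is rational over $\bbQ$ (Kronecker's criterion via vanishing of all sufficiently large Hankel determinants, or: the $\bbQ$-space of admissible denominators is cut out in $\bbQ[t]$ by $\bbQ$-linear equations), we may write $Z(f;t)=p(t)/q(t)$ with coprime $p,q\in\bbQ[t]$. For the degree formula, cancelling the common factor of the numerator and denominator of \eqref{eq:zeta1} changes $\deg p$ and $\deg q$ equally, so $\deg q-\deg p=\deg\det(\id-tA_+)-\deg\det(\id-tA_-)$; when $f$ is invertible, $A_\pm$ is invertible by functoriality of $TP_\pm(-)_{1/p}$, these degrees are $\dim_K V_\pm$, and their difference is the supertrace of the identity, namely $\mathrm{tr}(\id_{N\!\!M})$. (One reduces to this situation because $Z(f;t)$ depends only on the image $\bar f$ of $f$ in $B:=\End_{\NNum(k)_\bbQ}(N\!\!M)$: the $\cN$-ideal is closed under composition and consists of morphisms of vanishing trace, so adding an element of $\cN$ leaves every $\mathrm{tr}(f^{\circ n})$ unchanged; one then passes to the invertible part of $\bar f$ as in \cite{Zeta}.)

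For part~(iii), assume $f$ invertible, so $A_\pm$ are invertible $K$-linear operators on $V_\pm$ with $TP_\pm(f^{-1})_{1/p}=A_\pm^{-1}$. The elementary identity $\det(\id-t^{-1}A^{-1})=(-t)^{-\dim V}\det(A)^{-1}\det(\id-tA)$, applied to $A_+$ and $A_-$ and inserted into \eqref{eq:zeta1} for $f^{-1}$ at $t^{-1}$, gives
\begin{equation*}
Z(f^{-1};t^{-1})=(-t)^{\dim_K V_+-\dim_K V_-}\,\frac{\det(A_+)}{\det(A_-)}\,Z(f;t)=(-t)^{\mathrm{tr}(\id_{N\!\!M})}\,\frac{\det(A_+)}{\det(A_-)}\,Z(f;t).
\end{equation*}
It then remains to identify $\det(A_+)/\det(A_-)$ with $\det(f)=\prod_i\mathrm{Nrd}_i(f)^{\mu_i}$. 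Since this factor depends only on $\bar f\in B$, which by Theorem~\ref{thm:semi} is a finite dimensional semisimple $\bbQ$-algebra $\prod_iB_i$, the plan is to split $N\!\!M$ in the semisimple category $\NNum(k)_\bbQ$ into isotypic components and, on the $i$-th component, recognize the $\bbZ/2$-graded $TP$-realization as a multiple of the standard $B_i$-module with multiplicity precisely $\mu_i=\mathrm{tr}(e_i)/(\delta_id_i)$; then the super-determinant of that realization is $\mathrm{Nrd}_i(\bar f)^{\mu_i}$, and the product over $i$ yields the claim, following Kahn~\cite{Zeta}. I expect this final identification --- matching the super-determinant of the $TP$-realization with the reduced-norm determinant $\prod_i\mathrm{Nrd}_i(f)^{\mu_i}$ --- to be the main obstacle, since it is exactly the place where the semisimplicity of $\NNum(k)_\bbQ$ (Theorem~\ref{thm:semi}) and the structure theory of $B$ must be brought to bear; everything else reduces to formal manipulations with power series and characteristic polynomials.
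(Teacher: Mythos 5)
Your treatment of items~(i) and~(ii) follows the same path as the paper: monoidal functors preserve categorical traces, so the trace sequence $\mathrm{tr}(f^{\circ n})$ becomes a difference of matrix traces, and the classical identity $\exp\bigl(\sum_{n\ge1}\mathrm{tr}(g^n)t^n/n\bigr)=\det(\id-tg)^{-1}$ yields \eqref{eq:zeta1}; rationality over $\bbQ$ then follows by descent from $\bbC$ (the paper cites Milne where you invoke Kronecker/Hankel determinants, but the content is identical), and the degree count is the supertrace of the identity. The paper is actually terser than you are at the degree step --- it simply equates $\deg\det(\id-tA_\pm)$ with $\dim_\bbC V_\pm$ without remarking on the possible degree drop when $A_\pm$ is singular --- so your extra paragraph about reducing to the invertible part of $\bar f$ is, if anything, more careful than the source, even though it is only sketched.

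For item~(iii), however, your route diverges from the paper's and contains the gap you yourself flag. The paper disposes of the functional equation in one line: since both $Z(f;t)$ and $\det(f)$ are defined purely in terms of categorical traces and of $B=\End_{\NNum(k)_\bbQ}(N\!\!M)$, and since Theorem~\ref{thm:semi} makes $\NNum(k)_\bbQ$ an abelian semi-simple rigid $\bbQ$-linear category, Kahn's abstract result \cite[Thm.~3.2~b)]{Zeta} applies directly and already delivers $Z(f^{-1};t^{-1})=(-t)^{\mathrm{tr}(\id_{N\!\!M})}\det(f)\,Z(f;t)$ with $\det(f)=\prod_i\mathrm{Nrd}_i(f)^{\mu_i}$. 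Your plan instead works on the $TP$-realization side: after the elementary manipulation of characteristic polynomials you arrive at the factor $\det(A_+)/\det(A_-)$ and must identify it with $\prod_i\mathrm{Nrd}_i(f)^{\mu_i}$. That identification is precisely the non-trivial content of Kahn's theorem, and you correctly call it ``the main obstacle'' --- but you do not establish it. To make it rigorous one would have to show that, after decomposing $N\!\!M$ in $\NNum(k)_\bbQ$ into isotypic pieces, each $TP_\pm$-isotypic summand is isomorphic, as a $B_i$-module, to a direct sum of $\mu_i^\pm$ copies of the reduced (standard) module with $\mu_i^+-\mu_i^-=\mu_i$, and that the super-determinant of the realization on that piece equals $\mathrm{Nrd}_i(\bar f)^{\mu_i}$. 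None of this is in your write-up; it is exactly the bookkeeping that Kahn's theorem packages once and for all, which is why the paper routes the proof through $\NNum(k)_\bbQ$ rather than through $\mathrm{vect}_{\bbZ/2}(K)$. Either complete this identification or, more economically, argue as the paper does that $Z(f;t)$ and $\det(f)$ may be computed in the semi-simple category $\NNum(k)_\bbQ$ and then cite Kahn.
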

\begin{proof}
Thanks to Theorem \ref{thm:TP}, we have the following equality in $\bbC\llbracket t \rrbracket$:
\begin{equation}\label{eq:equality-verylast}
Z(f;t)= \mathrm{exp}(\sum_{n \geq 1} (\mathrm{tr}(TP_+(f^{\circ n})_{1/p})-\mathrm{tr}(TP_-(f^{\circ n})_{1/p}))\frac{t^n}{n}) \,.
\end{equation}
Moreover, the right-hand side of \eqref{eq:equality-verylast} agrees with
$$ \frac{\mathrm{exp}(\sum_{n \geq 1} \mathrm{tr}(TP_+(f^{\circ n})_{1/p})\frac{t^n}{n})}{\mathrm{exp}(\sum_{n \geq 1} \mathrm{tr}(TP_-(f^{\circ n})_{1/p})\frac{t^n}{n})} = \frac{\mathrm{exp}(\mathrm{log}(\mathrm{det}(\id - t \,TP_+(f)_{1/p})^{-1}))}{\mathrm{exp}(\mathrm{log}(\mathrm{det}(\id - t \,TP_-(f)_{1/p})^{-1}))} \,,$$
where the latter equality follows from \cite[Lem.~22.5]{Milne} (since $\bbC$ is algebraically closed). This proves item (i). Since $Z(f;t) \in \bbQ\llbracket t \rrbracket$ is rational as a formal power series with $\bbC$-coefficients, it is also rational as a formal power series with $\bbQ$-coefficients; see \cite[Lem.~27.9]{Milne}. Moreover, we have the following equality
$$
\mathrm{deg}(q(t))-\mathrm{deg}(p(t)) = \mathrm{deg}(\mathrm{det}(\id - t \,TP_+(f)_{1/p})) - \mathrm{deg}(\mathrm{det}(\id - t \,TP_-(f)_{1/p}))\,.
$$
This implies item (ii) because, thanks to Theorem \ref{thm:TP}, the right side agrees with
$$ \mathrm{dim}_\bbC (TP_+(N\!\!M)_{1/p} \otimes_{K, \iota} \bbC) -  \mathrm{dim}_\bbC (TP_-(N\!\!M)_{1/p} \otimes_{K, \iota} \bbC) = \mathrm{tr}(\id_{N\!\!M})\,.$$
Finally, note that $Z(f;t)$ can be computed in $\NChow(k)_\bbQ$ or in the abelian semi-simple category $\NNum(k)_\bbQ$. Therefore, item (iii) follows from \cite[Thm.~3.2 b)]{Zeta}.
\end{proof}
Given a smooth proper $k$-scheme $X$, recall that its Hasse-Weil zeta function is defined as $\zeta_X(s) := \prod_{x \in X} (1- (q^{-s})^{\mathrm{deg}(x)})^{-1}$. This product converges and defines $\zeta_X(s)$ as an holomorphic function for every $s \in \bbC$ such that $\mathrm{Re}(s)> \mathrm{dim}(X)$.
\begin{corollary}\label{cor:zeta}
Let $X$ be a irreducible smooth proper $k$-scheme $X$ of dimension $d$, and $\mathrm{E}:=\langle \Delta \cdot \Delta \rangle \in \bbZ$ the self-intersection number of the diagonal $\Delta$ of $X \times X$. 
\begin{itemize}
\item[(i)] Given an embedding $\iota\colon K \hookrightarrow \bbC$, we have the equality of holomorphic functions:
\begin{equation}\label{eq:zeta2}
\zeta_X(s) = \frac{\mathrm{det}(\id - q^{-s} \,TP_-(\mathrm{Fr}^\ast)_{1/p}\,|\, TP_-(\perf_\dg(X))_{1/p} \otimes_{K, \iota} \bbC)}{\mathrm{det}(\id - q^{-s}\, TP_+(\mathrm{Fr}^\ast)_{1/p}\,|\, TP_+(\perf_\dg(X))_{1/p} \otimes_{K, \iota} \bbC)}\,.
\end{equation}
\item[(ii)] The formal power series $Z_X(t)\in \bbQ\llbracket t \rrbracket$ is rational, \ie it can be written as $\frac{p(t)}{q(t)}$ with $p(t), q(t) \in \bbQ[t]$. Moreover, we have $\mathrm{deg}(q(t))-\mathrm{deg}(p(t))= \mathrm{E}$.
\item[(iii)] We have the functional equation $Z_X(\frac{1}{q^d t}) = \pm t^{\mathrm{E}} q^{\frac{d}{2} \mathrm{E}} Z_X(t)$.
\end{itemize}
\end{corollary}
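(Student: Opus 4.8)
The plan is to apply Theorem~\ref{thm:zeta} to the noncommutative Chow motive $N\!\!M := U(\perf_\dg(X))_\bbQ$ and the endomorphism $f := [{}_{\mathrm{Fr}^\ast}\mathrm{B}]_\bbQ$ induced by the Frobenius of $X$, and then to rewrite its conclusions in geometric terms. As recalled in the Frobenius example above, $Z(f;t) = Z_X(t) = \prod_{x \in X}(1-t^{\mathrm{deg}(x)})^{-1}$, $\mathrm{tr}(f^{\circ n}) = |X(\bbF_{q^n})|$, and $\zeta_X(s) = Z_X(q^{-s})$; moreover, by Theorem~\ref{thm:TP} the $K$-vector spaces $TP_\pm(N\!\!M)_{1/p}$ are finite dimensional, so for a given embedding $\iota\colon K \hookrightarrow \bbC$ the expressions $P_\pm(t) := \det(\id - t\,TP_\pm(f)_{1/p}\,|\,TP_\pm(N\!\!M)_{1/p}\otimes_{K,\iota}\bbC)$ are polynomials in $t$. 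For item (i), I would simply substitute $t = q^{-s}$ into the power-series identity \eqref{eq:zeta1} of Theorem~\ref{thm:zeta}(i); since $Z_X(t)$ has radius of convergence $q^{-d}$, for $\mathrm{Re}(s) > d$ one lands inside this disc and obtains the asserted equality \eqref{eq:zeta2} of holomorphic functions, whose right-hand side is a ratio of polynomials in $q^{-s}$ and hence furnishes the meromorphic continuation of $\zeta_X$ to $\bbC$.

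For item (ii), rationality of $Z_X(t)$ over $\bbQ$ and the equality $\mathrm{deg}(q(t)) - \mathrm{deg}(p(t)) = \mathrm{tr}(\id_{N\!\!M})$ follow at once from Theorem~\ref{thm:zeta}(ii), so the only task is to identify $\mathrm{tr}(\id_{N\!\!M})$ with the self-intersection number $\mathrm{E} = \langle \Delta\cdot\Delta\rangle$. Since the functor of Theorem~\ref{thm:TP} is symmetric monoidal it preserves categorical traces, so $\mathrm{tr}(\id_{N\!\!M})$ is the super-dimension $\dim_K TP_+(N\!\!M)_{1/p} - \dim_K TP_-(N\!\!M)_{1/p}$, which by (the proof of) Proposition~\ref{prop:crystalline} equals $\sum_n (-1)^n \dim_K H^n_{\mathrm{crys}}(X)_{1/p}$. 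I would then invoke the classical identification of this crystalline Euler characteristic with $\langle\Delta\cdot\Delta\rangle$: by adjunction along the diagonal embedding $\Delta \hookrightarrow X\times X$, whose normal bundle is the tangent bundle $\mathcal{T}_X$, one has $\langle\Delta\cdot\Delta\rangle = \int_X c_d(\mathcal{T}_X)$, and the Lefschetz trace formula for $\id_X$ in crystalline cohomology identifies this integral with $\sum_n (-1)^n \dim_K H^n_{\mathrm{crys}}(X)_{1/p}$.

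For item (iii), I would start from the factorization $Z_X(t) = P_-(t)/P_+(t)$ coming from item (i). Using the Frobenius-equivariant form of Hesselholt's comparison underlying Proposition~\ref{prop:crystalline}, $TP_+(N\!\!M)_{1/p}$, resp. $TP_-(N\!\!M)_{1/p}$, is identified with the sum of the even-, resp. odd-, degree crystalline cohomology of $X$ with $p$ inverted, together with its Frobenius action; Poincaré duality for crystalline cohomology then equips each $V_\pm := TP_\pm(N\!\!M)_{1/p}$ with a perfect self-pairing into $H^{2d}_{\mathrm{crys}}(X)_{1/p}$ (note that $H^i$ and $H^{2d-i}$ have the same parity), Frobenius-equivariantly for the action of $q^d$ on $H^{2d}_{\mathrm{crys}}(X)_{1/p}$. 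Hence the eigenvalues of $TP_\pm(f)_{1/p}$ are permuted by $\alpha \mapsto q^d/\alpha$, which forces $\det(TP_\pm(f)_{1/p}) = \pm q^{d e_\pm/2}$ and the functional equations $t^{e_\pm}P_\pm(1/(q^d t)) = \pm q^{-d e_\pm/2}P_\pm(t)$, where $e_\pm := \dim_K TP_\pm(N\!\!M)_{1/p}$. Dividing the equation for $P_-$ by that for $P_+$ gives $Z_X(1/(q^d t)) = \pm q^{d(e_+ - e_-)/2}\,t^{e_+ - e_-}\,Z_X(t)$, and $e_+ - e_- = \mathrm{tr}(\id_{N\!\!M}) = \mathrm{E}$ by item (ii), which is exactly the claimed functional equation. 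Alternatively, item (iii) can be read off Theorem~\ref{thm:zeta}(iii) once one knows that $f$ is invertible: the same duality gives $\mathrm{tr}((f^{-1})^{\circ n}) = q^{-nd}\mathrm{tr}(f^{\circ n})$, hence $Z(f^{-1};t^{-1}) = Z_X(1/(q^d t))$, and $\det(f) = \pm q^{d\mathrm{E}/2}$.

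The routine parts of this plan are the two power-series substitutions and the radius-of-convergence bookkeeping. The only substantive inputs beyond Theorems~\ref{thm:TP} and \ref{thm:zeta} and Proposition~\ref{prop:crystalline} are classical facts about the crystalline cohomology of smooth proper $\bbF_q$-varieties — Poincaré duality with $p$ inverted, the $q^d$-action on the top cohomology, and the Lefschetz trace formula for the identity — together with the Frobenius-equivariance of Hesselholt's comparison, which Proposition~\ref{prop:crystalline} as stated uses only at the level of dimensions. I expect that last compatibility, needed only for item (iii), to be the main point requiring care.
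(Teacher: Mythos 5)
Your proposal is correct and follows the paper's basic strategy: apply Theorem~\ref{thm:zeta} to $N\!\!M=U(\perf_\dg(X))_\bbQ$ and $f=[{}_{\mathrm{Fr}^\ast}\mathrm{B}]_\bbQ$, and identify the abstract quantities $\mathrm{tr}(\id_{N\!\!M})$ and $\det(f)$ geometrically. Item (i) is exactly the paper's argument (substitute $t=q^{-s}$ in \eqref{eq:zeta1}).

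Where you diverge is in how you establish the two identifications. The paper's (very terse) proof implicitly computes $\mathrm{tr}(\id_{U(\perf_\dg(X))_\bbQ})=\mathrm{E}$ by passing through the commutative diagram \eqref{eq:bridge}: $\pi$ and $\Phi$ are symmetric monoidal, hence preserve categorical traces, and $\mathrm{tr}(\id_{\mathfrak{h}(X)_\bbQ})=\langle\Delta\cdot\Delta\rangle$ is the classical motivic Euler characteristic. Your route instead pushes forward along $TP_\pm$ to a super-dimension, identifies it with the crystalline Euler characteristic via Proposition~\ref{prop:crystalline}, and then invokes the Lefschetz formula for $\id_X$ in crystalline cohomology; this works but imports the crystalline Lefschetz formula where the motivic-bridge argument needs only the definition of the Chow-motivic Euler characteristic. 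For item (iii), the paper simply cites the classical functional equation \cite[App.~C Thm.~4.4]{Hartshorne} to read off the noncommutative determinant $\det(f)=\pm q^{\frac{d}{2}\mathrm{E}}$ and then applies Theorem~\ref{thm:zeta}(iii) (this is precisely your ``alternative'' route, including the needed observation $Z(f^{-1};t^{-1})=Z_X(1/(q^dt))$, which the paper leaves implicit). Your primary route for (iii), via Poincar\'e duality on $TP_\pm$ identified with even/odd crystalline cohomology, essentially reproves the classical functional equation from scratch; as you correctly flag, it requires the \emph{Frobenius-equivariance} of Hesselholt's comparison, which is stronger than the dimension-level statement actually extracted in Proposition~\ref{prop:crystalline}. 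That equivariance is true but is not established in this paper, so the citation-based argument (your alternative, and the paper's) is the cleaner route to (iii) given the toolkit available here.
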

\begin{proof}
Thanks to Theorem \ref{thm:zeta}, item (i) follows from the equality $\zeta_X(s)= Z_X(q^{-s})$, item (ii) from the computation $\mathrm{tr}(\id_{U(\perf_\dg(X))_\bbQ})=\mathrm{E}$, and item (iii) from the computation $\mathrm{det}([{}_{\mathrm{Fr}^\ast} \mathrm{B}]_\bbQ)=\pm q^{\frac{d}{2} \mathrm{E}}$ (see \cite[App.~C Thm.~4.4]{Hartshorne}).
\end{proof}
The equality \eqref{eq:zeta2} provides an interpretation of the Hasse-Weil zeta function in terms of topological periodic cyclic homology. This interpretation was originally established by Hesselholt in \cite{Hesselholt}. His proof is based on the combination of the spectral sequences \eqref{eq:spectral1}-\eqref{eq:spectral2} with Berthelot's interpretation \cite[\S VIII]{Berthelot} of the Hasse-Weil zeta function in terms of crystalline cohomology. Note that the above equality \eqref{eq:zeta1} provides a far-reaching noncommutative generalization of this phenomenon. 
\begin{remark}[Analytic continuation]
It is well-known that the Hasse-Weil zeta function admits an analytic continuation to the entire complex plane as a meromorphic function. Making use of Deninger's theory of regularized determinants, Hesselholt was able to remarkably modify the right-hand side of \eqref{eq:zeta2} in order to obtain an equality of meromorphic functions!; see \cite[Thm.~A]{Hesselholt}.
\end{remark}
\begin{remark}[Weil conjecture]
The celebrated Weil conjecture \cite{Weil} is divided into four claims about the Hasse-Weil zeta function: rationality, functional equation, analogue of the Riemann hypothesis, and Betti numbers; consult \cite[App.~C]{Hartshorne}. Item (ii), resp. item (iii), of Corollary \ref{cor:zeta} provides a solution to the rationality, resp. functional equation, claim of the Weil conjecture. Dwork's, resp. Grothendieck's, solution of these claims used $p$-adic analysis, resp. \'etale cohomology; see \cite{Dwork, Grothendieck}.

The above considerations suggest that item (ii), resp. item (iii), of Theorem \ref{thm:zeta} may be understood as the solution of the rational, resp. functional equation, claim of a far-reaching ``noncommutative Weil conjecture''.
\end{remark}
\section{Proof of Theorem \ref{thm:main1}}
Note first that, by definition, the numerical Grothendieck group $K_0(\cA)/_{\!\!\sim \mathrm{num}}$ is torsion-free. Therefore, in order to prove that $K_0(\cA)/_{\!\!\sim \mathrm{num}}$ is a finitely generated free abelian group, it suffices to show that the $\bbQ$-vector space $K_0(\cA)/_{\!\!\sim \mathrm{num}}\otimes_\bbZ \bbQ$ is finite dimensional. Consider the following bilinear pairing:
\begin{eqnarray*}
\varphi\colon K_0(\cA) \times K_0(\cA^\op) \too K_0(k) \simeq \bbZ && ([M],[P]) \mapsto [M\otimes^{\bf L}_\cA P]\,.
\end{eqnarray*}
Under \eqref{eq:Homs}, $\varphi$ corresponds to the composition bilinear pairing in $\NChow(k)$:
$$
\Hom(U(k),U(\cA))\times \Hom(U(\cA),U(k))\too \mathrm{End}(U(k)) \quad (f,g) \mapsto g\circ f = \mathrm{tr}(g\circ f)\,.
$$
Let $M\in \cD_c(\cA)$. Thanks to Proposition \ref{prop:key2} below, we have $\chi([M],[N])=0$ for every $N \in \cD_c(\cA)$ if and only if $\varphi([M],[P])=0$ for every $P \in \cD_c(\cA^\op)$. This implies that $K_0(\cA)/_{\!\!\sim \mathrm{num}}$ is isomorphic to $\Hom_{\NNum(k)}(U(k),U(\cA))$. Consequently, we conclude that the $\bbQ$-vector space $K_0(\cA)/_{\!\!\sim \mathrm{num}} \otimes_\bbZ \bbQ$ is isomorphic to
\begin{equation}\label{eq:vector}
\Hom_{\NNum(k)}(U(k),U(\cA))\otimes_\bbZ \bbQ \simeq \Hom_{\NNum(k)_\bbQ}(U(k)_\bbQ, U(\cA)_\bbQ)\,,
\end{equation}
where the latter isomorphism follows from the compatibility of the $\otimes$-ideal $\cN$ with change of coefficients. Finally, since the category $\NNum(k)_\bbQ$ is abelian semi-simple, the preceding $\bbQ$-vector space \eqref{eq:vector} is finite dimensional. This concludes the proof.
\begin{proposition}\label{prop:key2}
Given a right dg $\cA$-module $M \in \cD_c(\cA)$, we have $\chi([M],[N])=0$ for every $N \in \cD_c(\cA)$ if and only if $\varphi([M],[P])=0$ for every $P \in \cD_c(\cA^\op)$.
\end{proposition}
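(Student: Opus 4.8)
The plan is to reduce the proposition to the fact --- recalled in the introduction and established in \cite[Prop.~4.24]{book} --- that, $\cA$ being smooth, the left and right kernels of the Euler pairing $\chi$ on $K_0(\cA)$ coincide. Indeed, since classes of objects generate $K_0(\cA)$, the condition ``$\chi([M],[N])=0$ for every $N\in\cD_c(\cA)$'' says exactly that $[M]$ lies in the \emph{left} kernel of $\chi$; I will re-express the condition ``$\varphi([M],[P])=0$ for every $P\in\cD_c(\cA^\op)$'' as the statement that $[M]$ lies in the \emph{right} kernel of $\chi$, and the equivalence will then follow from \loccit. Note that smoothness is used only at this last step; the rest of the argument works for any dg category, and properness is used only to guarantee that $\chi$ is defined.

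The bridge between $\chi$ and $\varphi$ is dg duality. For $M\in\cD_c(\cA)$ set $M^\vee:=\RHom_\cA(M,\cA)$, with $\cA$ viewed as the diagonal $\cA$-$\cA$-bimodule; since $\RHom_\cA(\cA(-,x),\cA)\simeq\cA(x,-)$ sends representables to representables, $(-)^\vee$ restricts to a duality between $\cD_c(\cA)$ and $\cD_c(\cA^\op)$, with $M^{\vee\vee}\simeq M$. For $M\in\cD_c(\cA)$ and any right dg $\cA$-module $N$, the natural composition map $N\otimes^{\bf L}_\cA M^\vee\to\RHom_\cA(M,N)$ is an isomorphism: this holds when $M=\cA(-,x)$ by the Yoneda lemma (both sides being $N(x)$), and since both are triangulated contravariant functors of $M$ agreeing on representables, the subcategory of $\cD_c(\cA)$ on which they agree is thick, hence all of $\cD_c(\cA)$. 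When $N\in\cD_c(\cA)$ the complex $\RHom_\cA(M,N)$ is perfect over $k$ (either by properness of $\cA$, or because $N$ and $M^\vee$ are perfect), so we may pass to classes in $K_0(k)\simeq\bbZ$; using additivity of the Euler characteristic on distinguished triangles, the above isomorphism gives
\[
\chi([M],[N])=[\RHom_\cA(M,N)]=[N\otimes^{\bf L}_\cA M^\vee]=\varphi([N],[M^\vee])\,.
\]
Applying this identity with $P^\vee$ (for $P\in\cD_c(\cA^\op)$) in place of $M$ and using $(P^\vee)^\vee\simeq P$, then renaming the free variable, yields the dual identity $\varphi([M],[P])=\chi([P^\vee],[M])$.

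It remains to conclude. The second identity rewrites the condition ``$\varphi([M],[P])=0$ for every $P\in\cD_c(\cA^\op)$'' as ``$\chi([P^\vee],[M])=0$ for every $P\in\cD_c(\cA^\op)$''; since $(-)^\vee$ is an equivalence, as $P$ ranges over $\cD_c(\cA^\op)$ the object $P^\vee$ ranges (up to isomorphism) over all of $\cD_c(\cA)$, so this is equivalent to $[M]$ belonging to the right kernel of $\chi$. As observed above, the condition ``$\chi([M],[N])=0$ for every $N\in\cD_c(\cA)$'' says that $[M]$ belongs to the left kernel of $\chi$. Since $\cA$ is smooth, \cite[Prop.~4.24]{book} identifies these two kernels, and the proposition follows. \emph{The main obstacle} is the clean set-up of the dg duality --- pinning down the variance and the left/right module bookkeeping in the isomorphism $N\otimes^{\bf L}_\cA M^\vee\simeq\RHom_\cA(M,N)$, checking that $(-)^\vee$ genuinely lands in $\cD_c(\cA^\op)$ and is an equivalence, and verifying that the reduction to representables survives passing to classes in $K_0(k)$, i.e.\ that the Euler characteristic is additive on triangles.
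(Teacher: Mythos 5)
Your proof is correct, but it takes a genuinely different route from the paper's. You use the $\cA$-linear dual $M^\vee:=\RHom_\cA(M,\cA)$ together with the isomorphism $N\otimes^{\bf L}_\cA M^\vee\simeq\RHom_\cA(M,N)$, which produces the identity $\varphi([M],[P])=\chi([P^\vee],[M])$; this places $[M]$ in the \emph{second} slot of $\chi$, so to close the argument you must invoke the smoothness-dependent fact from \cite[Prop.~4.24]{book} that the left and right kernels of $\chi$ coincide. The paper instead uses the $k$-linear dual $M^\ast$ (objectwise dualization, compactness of which over $\cA^\op$ again uses smoothness and properness) and the two dualities of Lemma \ref{lem:key}, yielding $\varphi([M],[P])=\chi([M],[P^\ast])$; this keeps $[M]$ in the \emph{first} slot of $\chi$, and since $(-)^\ast$ is a contravariant equivalence $\cD_c(\cA^\op)\simeq\cD_c(\cA)$, the equivalence of the two annihilation conditions follows directly, without ever comparing left and right kernels. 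In short: your argument re-routes through the left/right kernel comparison (and thereby makes visible exactly where smoothness enters), while the paper's argument avoids that detour at the cost of introducing the $k$-linear duality lemma; both are complete proofs of essentially the same length, and your isomorphism $N\otimes^{\bf L}_\cA M^\vee\simeq\RHom_\cA(M,N)$ plays the structural role that Lemma \ref{lem:key} plays in the paper. One small cosmetic remark: when you reduce the dévissage to representables, it is worth saying explicitly that one works with the natural map and the class of objects on which it is an equivalence forms a thick subcategory, as you do; the passage to $K_0$ at the end is then immediate since quasi-isomorphic perfect complexes over $k$ have the same Euler characteristic.
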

\begin{proof}
Note first that the differential graded structure of the dg category of complexes of $k$-vector spaces $\cC_\dg(k)$ makes the category of right dg $\cA$-modules $\cC(\cA)$ into a dg category $\cC_\dg(\cA)$. Given right dg $\cA$-modules $M,N \in \cD_c(\cA)$, let us write ${\bf R}\Hom_\cA(M,N) \in \cD_c(k)$ for the corresponding (derived) complex of $k$-vector spaces. Since the Grothendieck class $[{\bf R}\Hom_\cA(M,N)] \in K_0(k)\simeq \bbZ$ agrees with the alternating sum $\sum_n (-1)^n \mathrm{dim}_K\, \Hom_{\cD_c(\cA)}(M,N[-n])$, the Euler bilinear pairing $\chi\colon K_0(\cA) \times K_0(\cA) \to \bbZ$ can be re-written as $([M],[N]) \mapsto [{\bf R}\Hom_\cA(M,N)]$.

Given a right dg $\cA$-module $M$, let us denote by $M^\ast$ the right dg $\cA^\op$-module obtained by composing the dg functor $M\colon \cA^\op \to \cC_\dg(k)$ with the $k$-linear duality $(-)^\ast$. Note that since the dg category $\cA$ is proper, the assignment $M\mapsto M^\ast$ gives rise to an equivalence of categories $\cD_c(\cA) \simeq \cD_c(\cA^\op)^\op$. 

We now have the ingredients necessary to conclude the proof. Assume that $\chi([M],[N])=0$ for every $N \in \cD_c(\cA)$. We need to show that $\varphi([M],[P])=0$ for every $P \in \cD_c(\cA^\op)$. In other words, we need to show that the Grothendieck class $[M\otimes^{\bf L}_\cA P] \in K_0(k)\simeq \bbZ$ is zero for every $P \in \cD_c(\cA^\op)$. Thanks to Lemma \ref{lem:key}(ii) below, the (derived) complex of $k$-vector spaces $M\otimes^{\bf L}_\cA P$ is $k$-linear dual to ${\bf R}\Hom_\cA(M,P^\ast)$. Consequently, since $[{\bf R}\Hom_\cA(M,P^\ast)]=\chi([M],[P^\ast])=0$, we conclude that $[M\otimes^{\bf L}_\cA P]=0$. The proof of the converse implication is similar.
\end{proof}
\begin{lemma}\label{lem:key}
\begin{itemize}
\item[(i)] Given $M,N \in \cD_c(\cA)$, the associated (derived) complexes of $k$-vector spaces ${\bf R}\Hom_\cA(M,N)$ and $M\otimes^{\bf L}_\cA N^\ast$ are $k$-linear dual to each other. 
\item[(ii)] Given $M\in \cD_c(\cA)$ and $P \in \cD_c(\cA^\op)$, the associated (derived) complexes of $k$-vector spaces ${\bf R}\Hom_\cA(M,P^\ast)$ and $M\otimes^{\bf L}_\cA P$ are $k$-linear dual to each other. 
\end{itemize}
\end{lemma}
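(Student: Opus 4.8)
The strategy is to obtain both statements as consequences of the derived tensor--hom adjunction for dg modules, invoking the properness of $\cA$ only at the point where one needs to know that certain complexes of $k$-vector spaces are perfect.

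I would begin by recalling the tensor--hom adjunction in this context (a standard fact; see e.g. \cite[\S3]{ICM-Keller} and \cite[\S1.6]{book}): for a right dg $\cA$-module $M$, a right dg $\cA^\op$-module $P$ (equivalently, a left dg $\cA$-module), and a complex $V\in\cC(k)$, there is a natural isomorphism in $\cD(k)$
$$ {\bf R}\Hom_k(M\otimes^{\bf L}_\cA P, V)\;\simeq\;{\bf R}\Hom_\cA\big(M, {\bf R}\Hom_k(P,V)\big)\,, $$
in which ${\bf R}\Hom_k(P,V)$ is regarded as a right dg $\cA$-module through the left $\cA$-action on $P$. Taking $V=k$ and using that $k$-linear duality is exact over the field $k$, so that ${\bf R}\Hom_k(P,k)=P^\ast$, this specializes to
$$ (M\otimes^{\bf L}_\cA P)^\ast\;\simeq\;{\bf R}\Hom_\cA(M, P^\ast)\,. $$
To conclude item (ii) — that $M\otimes^{\bf L}_\cA P$ and ${\bf R}\Hom_\cA(M,P^\ast)$ are mutually $k$-linear dual — it then remains only to see that these are \emph{perfect} complexes of $k$-vector spaces; since over a field a complex is perfect precisely when it has bounded finite-dimensional cohomology, biduality $V\simeq V^{\ast\ast}$ holds for such complexes, and the displayed isomorphism dualizes to $M\otimes^{\bf L}_\cA P\simeq{\bf R}\Hom_\cA(M,P^\ast)^\ast$.

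The perfectness claim is where properness of $\cA$ enters, and it is the only step requiring a (short) dévissage. The module $M$ lies in the thick subcategory of $\cD(\cA)$ generated by the representable modules $\widehat{x}=\cA(-,x)$; for $P\in\cD_c(\cA^\op)$ one has $\widehat{x}\otimes^{\bf L}_\cA P\simeq P(x)$, and $P(x)$ is perfect over $k$ because $P$ lies in the thick subcategory of $\cD(\cA^\op)$ generated by the representable $\cA^\op$-modules, whose value at $x$ is (up to shift) a complex of the form $\cA(z,x)$, of finite total cohomological dimension by properness. Filtering $M$ by representables then shows that $M\otimes^{\bf L}_\cA P$ is perfect, hence so is its dual. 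Finally, item (i) follows by applying (ii) to $P:=N^\ast$ for $N\in\cD_c(\cA)$: since $\cA$ is proper, $N^\ast\in\cD_c(\cA^\op)$ — the assignment $(-)^\ast$ being the equivalence $\cD_c(\cA)\simeq\cD_c(\cA^\op)^\op$ recalled in the proof of Proposition \ref{prop:key2} — and the canonical map $N\to(N^\ast)^\ast$ is a quasi-isomorphism, being one objectwise by biduality of perfect complexes of $k$-vector spaces. Hence $M\otimes^{\bf L}_\cA N^\ast\simeq{\bf R}\Hom_\cA(M,(N^\ast)^\ast)^\ast\simeq{\bf R}\Hom_\cA(M,N)^\ast$, which is the assertion of (i). The only genuine obstacle is the perfectness bookkeeping just described; everything else is formal.
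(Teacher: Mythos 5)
Your proof is correct, but it takes a genuinely different route from the paper's. The paper proves item (i) directly by d\'evissage: it checks the isomorphism ${\bf R}\Hom_\cA(\widehat{y}[n],N)^\ast\simeq\widehat{y}[n]\otimes^{\bf L}_\cA N^\ast$ on representable (shifted) modules, then propagates it through finite homotopy (co)limits --- using that $k$-linear dualization swaps finite holims with hocolims --- and finally through retracts; item (ii) is declared similar. You instead obtain the isomorphism $(M\otimes^{\bf L}_\cA P)^\ast\simeq{\bf R}\Hom_\cA(M,P^\ast)$ at one stroke from the derived tensor--hom adjunction, which is valid with no properness hypothesis, and you reserve d\'evissage for a subsidiary perfectness check: that $M\otimes^{\bf L}_\cA P$ has bounded finite-dimensional cohomology over $k$ when $\cA$ is proper. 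That perfectness is exactly what makes the biduality go through --- both to upgrade the one-sided statement $V^\ast\simeq W$ to ``dual to each other'', and to justify $N\simeq N^{\ast\ast}$ in your deduction of (i) from (ii). Your organization thus isolates more clearly \emph{where} properness is used than the paper's direct computation does, and your one-sided isomorphism is in fact the general tensor--hom adjunction with no finiteness hypotheses. The paper's version is more self-contained (it never invokes the adjunction as a black box) but is somewhat less transparent about the fact that the finiteness of the index category and the perfectness are what the whole argument turns on. Both are valid.
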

\begin{proof}
We prove solely item (i); the proof of item (ii) is similar. Given an object $y \in \cA$ and an integer $n \in \bbZ$, consider the following right dg $\cA$-module:
\begin{eqnarray*}
\widehat{y}[n] \colon \cA^\op \too \cC_\dg(k) && x \mapsto \cA(x,y)[n]\,.
\end{eqnarray*}
Every right dg $\cA$-module $M \in \cD_c(\cA)$ is a retract of a finite homotopy colimit of right dg $\cA$-modules of the form $\widehat{y}[n]$. When $M=\widehat{y}[n]$, we have:
$$ {\bf R}\Hom_\cA(\widehat{y}[n], N)^\ast \simeq (N(y)[-n])^\ast \simeq N^\ast(y)[n]\simeq \widehat{y}[n]\otimes^{\bf L}_\cA N^\ast\,.$$
In the same vein, when $M=\mathrm{hocolim}_{i\in I} \,\widehat{y_i}[n_i]$, we have natural isomorphisms
\begin{eqnarray}
{\bf R}\Hom_\cA(\mathrm{hocolim}_{i\in I} \,\widehat{y_i}[n_i], N)^\ast & \simeq & (\mathrm{holim}_{i\in I}  \,{\bf R}\Hom_\cA(\widehat{y_i}[n_i],N))^\ast \nonumber \\
& \simeq & \mathrm{hocolim}_{i \in I} \,{\bf R}\Hom_\cA(\widehat{y_i}[n_i],N)^\ast \label{eq:star-last}\\
& \simeq & \mathrm{hocolim}_{i \in I} \,(\widehat{y_i}[n_i]\otimes^{\bf L}_\cA N^\ast)  \nonumber\\
& \simeq & (\mathrm{hocolim}_{i \in I}\, \widehat{y_i}[n_i])\otimes^{\bf L}_\cA N^\ast\,, \nonumber
\end{eqnarray}
where in \eqref{eq:star-last} we are using the fact that the indexing category $I$ is finite. The proof follows now from the fact that the $k$-linear duality $M \mapsto M^\ast$ preserves retracts.
\end{proof}

\begin{remark}[Rank]
As the proof of Theorem \ref{thm:main1} shows, the rank of the numerical Grothendieck group $K_0(\cA)/_{\!\!\sim \mathrm{num}}$ is $\leq \mathrm{dim}_K TP_+(\cA)_{1/p}$.
\end{remark}
\begin{remark}[Kontsevich's noncommutative numerical motives]
As explained in \S\ref{sec:NCmotives}, the category $\NChow(k)$ is rigid. Therefore, as the proof of Theorem \ref{thm:main1} shows, we have $\Hom_{\NNum(k)}(U(\cA),U(\cB)) \simeq K_0(\cA^\op \otimes \cB)/\mathrm{Ker}(\chi)$ for any two smooth proper dg categories $\cA$ and $\cB$. This implies that we can alternatively define the category of noncommutative numerical motives $\NNum(k)$ as the idempotent completion of the quotient of $\NChow(k)$ by the $\otimes$-ideal $\mathrm{Ker}(\chi)$. This was the approach used by Kontsevich in his seminal talk \cite{IAS}.
\end{remark}

\medbreak\noindent\textbf{Acknowledgments:} The author is very grateful to Lars Hesselholt for useful discussions concerning topological periodic cyclic homology. The author is also thankful to the Mittag-Leffler Institute for its hospitality.

\end{document}

\end{proof}